\documentclass[11pt]{amsart}
\usepackage{graphicx,color}
\usepackage[utf8]{inputenc}
\newtheorem{theorem}{Theorem}[section]
\newtheorem{lemma}[theorem]{Lemma}

\newtheorem{corollary}{Corollary}[theorem]
\newtheorem*{proposition*}{proposition}
\newtheorem*{theorem*}{Theorem}
\newtheorem*{lemma*}{Lemma}
\theoremstyle{definition}
\newtheorem{definition}[theorem]{Definition}
\newtheorem{example}[theorem]{Example}

\usepackage[utf8]{inputenc}
\theoremstyle{remark}
\newtheorem{remark}[theorem]{Remark}
\usepackage[english]{babel}
\usepackage[T1]{fontenc}
\usepackage{appendix}
\usepackage{ifthen}
\usepackage{mathrsfs}
\usepackage{psfrag}
\numberwithin{equation}{section}

\begin{document}
\title{A Study in Solution Models for SAT using the PBNF-transform}
\author{Pelle Brooke Borgeke}
\address{Linn\oe us university}
\curraddr{}
\email{pelle.borgeke@lnu.se}
\subjclass[2010]{primary}
\keywords{Dualization, PBNF-transform, Higher-Arity Problems, SAT}
\date {21 October 2026}
\begin{abstract} In this article, we use the PBNF-transform, or Polynomial Boolean Normal Form, described earlier in [2] and [3] and now extended to higher arity, to solve Boolean satisfiability problems (SAT), which are part of the P versus NP problem. SAT consists of a statement of conjunctions of clauses $S^j_{c(\lor,\ell\geq 3)}$, and in our SAT-solving models, we dualize the logical equations, inverting the problem by \emph{multiplying} letters (literals) in the transferred clauses by $ g$, $g(S^j_{c(\land(\lor),\ell\geq 3)})= pqrs\cdots s_{\ell}$, and using a binary iterated composition based on \emph{adding polynomials} for the conjunctions in our solution formula. We obtain Polynomial Boolean Equations, PBE=0, by also renaming the truth value $1\mapsto 0$ (T). To find a solution, we look for repeating literals in the clauses to factor out. With reversed literal repetition, we can even reduce the CNF to the sum of the transformed clauses.  \end{abstract}
\maketitle
\section{Introduction: SAT-solving models and Results}
In this work, we use the PBNF\emph{-transform} (Polynomial Boolean Normal Form), described in [2] and [3] and now extended to higher arity (see Section 2), in an application to construct solving models for the Boolean satisfiability problem, called SAT, studied in computer science and logic. It is a logical form written in CNF, shown below, that is, a statement of conjunctions binding a fixed but unbounded number of clauses $S^j_{c(\land(\lor),\ell\geq 3)}$ in the logical operator space $\mathcal{OP}$, both in length and number of literals ($\ell$); classical, $c$, with truth values 1 and 0. SAT is an example of the \emph{P versus NP problem} [5], usually informally stated as: "Whether every problem whose solution can be quickly verified can also be quickly solved." Algorithms for this problem use computers [7], but CNF is often also converted, e.g., [6] and [10], into Boolean polynomial representations. Instead of addressing the modern Gordian knot below directly,   $$ S^1_{c(\lor,\ell \geq 3)} \land S^2_{c(\lor,\ell\geq 3)} \land \ldots \land S^{j-1}_{c(\lor,\ell\geq 3)} \land S^j_{c(\lor,\ell\geq 3)}\land\ldots \land S^{n-1}_{c(\lor,\ell\geq 3)} \land S^n_{c(\lor,\ell\geq 3} $$ $$  \land \  S^{n}_{c(\lor,\ell\geq 3)} \land  S^{n-1}_{c(\lor,\ell\geq 3)}\land \ldots \land  S^j_{c(\lor,\ell\geq 3)} \land  S^{j-1}_{c(\lor,\ell\geq 3} \land \ldots  S^2_{c(\lor,\ell\geq 3)}\land  S^1_{c(\lor,\ell\geq 3)},$$ we suggest here a PBNF-based \emph{inverting} description by dualizing truth values, $1$(T) $\mapsto  0$ and the logical forms $\lor \mapsto \land$, $\land \mapsto \lor$, inverting the inputs, but we let $\lnot$ be fixed. This leads to the composition that turns clauses into phrases $g(\hat S_{c(\lor,\ell \geq 3)})=\hat s_g^j=(pqrs...s_\ell)\in g(p',q',0),$ and to a conjunctive iterated composition $\circ =: \hat s_g^{j-1}.\hat s_g^j.'= \hat s_g^{j-1}\hat s_g^j +\hat s_g^{j-1} + \hat s_g^j $ in (1.1)  and next to the solution formula (1.2)   \begin{align}(\cdot \cdot(((((.s_g^1. s_g^2). \cdot \cdot   (s_g^{n-1}. s_g^n).).).).). \circ (((((s_g^n.  s_g^{n-1}.). \cdot \cdot (s_g^2. s_g^1.).).).).).\cdot \cdot )' \\ ((\cdot \cdot(((((.s_g^1. s_g^2). \cdot \cdot   (s_g^{n-1}. s_g^n).).).).). \cdot \cdot (((((s_g^n.  s_g^{n-1}.). \cdot \cdot (s_g^2. s_g^1.).).).).).\cdot \cdot ).)'=0 \end{align} which is a \emph{Polynomial Boolean Equation}, PBE=0, representing the CNF-form, that we study. The transfer function $g$ is defined in (2.6), and lower dots are inner complements, and outer complements "$'$" terminate the formula to the right (see table in beginning of Section 2). As representatives of a collection of polynomial symbols, a \emph{multinomial form}, we use the hat accent for $\hat s_g^j, \hat p$ and $\hat q$, etc., used in our extended \emph{binary model case}; see Section 2. The formula (1.2) is suited for larger problems; it is an \emph{outer multiplication}, based on the complement of $\hat p.\hat q.$ which is the same as the Boolean join, the \emph{inner additive} \footnote{With "inner" and "outer" we refer to our geometric view to visualize the basic polynomial forms as areas in the unit-square, described in [2].} composition (which is also used), $\hat p \hat q+\hat p+\hat q$, here inverted, showing the meet (conjunction). With a condition of reversed repetition of letters $(\hat p \to \lnot \hat p)$ in the clauses we can reduce (1.2) to \begin{align}s_g^1+ s_g^2+ \cdots + s_g^{n-1}+ s_g^{n}+ s_g^n+ s_g^{n-1}+ \cdots + s_g^2+ s_g^{1}=0 \end{align} as now $\hat p. \hat q.'=\hat p +\hat q$.  The following is an example $$(p\lor q \lor r) \land (\lnot p \lor \lnot q \lor r) \land (p\lor \lnot q \lor r) \land (\lnot p\lor \lnot q \lor \lnot r)$$ of a small 3-SAT, giving only addition of phrases. $$s_g^1+ s_g^2+s_g^3+ s_g^4=pqr+ p.q.r+pq.r+ p.q.r.=pr(q+q.)+p.q.(r+r.)=pr+p.q.=0.$$ We find $(01r)$ by factorization, reduction and a logic step where $p.q.=0$, avoiding contradiction, and thus $pr=0.$ Even if we do not fully cancel higher-order terms from the multiplicative $\hat p\hat q$, such terms often contract with the additive terms as in the example above. The higher-order terms left are often \emph{saturated}, meaning of the form $p.q.r.s.\cdots s_{\ell}.$, so that every literal is negated, giving a logical argument, as above, in a final step. The following is Theorem 3.13 in Section 3. \begin{theorem*} Let the Boolean satisfiable problem for a conjunct of $n\in \mathbb{N}$ clauses, called the \emph{SAT}-problem, be transferred by inversion of the \emph{PBNF}-family $g(p',q',0)\cong g(p,q,1)$ (see table, Section 2) into multiplication for the clauses $g(\hat S_{c(\lor,\ell \geq 3)})=\hat s_g^j=(pqrs...s_\ell),$ and for the conjunctions, a reflected polynomial composition $(\circ)$ based on the iterated \emph{multiplicative} binary transfer function $g:\mathcal{PB} \times \mathcal{PB} \to \mathcal{PB},$ defined by \begin{align}s^{j-1}_g \circ s^{j}_g=(s^{j-1}_g+1) (s^{j}_g+1)+1=s^{j-1}_g.s^{j}_g.' \end{align} acting first on simple arguments as above, then $$(s^{j-1}_g. s^j_g.)' \ \text{and}\ (s^{j+1}_g. s^{j+2}_g.)'= ((s^{j-1}_g. s^j_g.).). ((s^{j+1}_g. s^{j+2}_g.).)' $$ and so on, until we compose the complete left and right sides to get the polynomial equation \emph{(1.2)}. The additive formulas $$s^{j-1}_gs^{j}_g + s^{j-1}_g +s^{j}_g=s^{j-1}_gs^{j}_g. + s^{j}=0$$ are equivalent formulations of the additive composition of phrases, giving step-by-step reducible equations.\end{theorem*}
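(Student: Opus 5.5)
The plan is to read the statement as a correctness claim for the encoding. Under the dualized valuation ($1$(T)$\mapsto 0$, arithmetic in $\mathbb{F}_2$, $\lnot \hat p = \hat p. = \hat p+1$), the CNF-form is true under an assignment exactly when the polynomial (1.2) vanishes at the corresponding point of $\{0,1\}^m$. The proof then has three layers: the literals and clauses, the binary composition $\circ$, and the iterated, reflected composition.

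\textbf{Clauses.} First I fix the dictionary. A literal is true iff its image $\hat p$ (or $\hat p.$ for a negated literal) takes the value $0$; this holds because $\lnot$ is kept fixed and $p.=p+1$ swaps $0$ and $1$. Then a clause $S^j_{c(\lor,\ell\geq 3)}$ is true iff at least one literal takes the value $0$. Over $\mathbb{F}_2$, and indeed in any integral domain, this happens iff the product $\hat s_g^j = pqr\cdots s_\ell$ vanishes. So $g$ sends each clause to a phrase whose zero set is exactly its set of satisfying assignments.

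\textbf{Binary composition.} Next I verify the algebraic identities in (1.3) directly:
\[
(s^{j-1}_g+1)(s^{j}_g+1)+1 = s^{j-1}_g s^j_g + s^{j-1}_g + s^j_g = s^{j-1}_g\, s^j_g. + s^j_g .
\]
The second equality is just the expansion $a(b+1)+b = ab+a+b$. On $\{0,1\}$ the polynomial $ab+a+b$ is the Boolean join of $a$ and $b$. Hence it vanishes iff $a=b=0$, i.e.\ iff both clauses are true, so $\circ$ realizes the conjunction in the dual picture.

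\textbf{Iteration and reflection.} I then show by induction on the bracketing depth that any fully parenthesized $\circ$-composition of phrases vanishes iff every phrase vanishes. The key facts are that the join is associative and commutative as a Boolean function, so the nesting in (1.2) does not matter pointwise. The join is also idempotent ($a\circ a = a^2+a+a = a$ on $\{0,1\}$), so the reflected right half $s_g^n,\dots,s_g^1$ adds no new condition. It follows that (1.2)$\,=0$ has exactly the solutions of the CNF-form, and the additive formulas give equivalent step-by-step reducible equations.

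The step I expect to be the main obstacle is making the nested notation of (1.2) precise. The inner dots and outer primes must compose exactly to iterated $\circ$, and one must then check that the induction respects the pointwise identity rather than a polynomial identity. For example, $a^2 \neq a$ as polynomials, so reductions have to be done modulo $x^2=x$. I would first formalize the pointwise statement in the quotient ring $\mathbb{F}_2[x_1,\dots,x_m]/(x_i^2+x_i)$, where idempotence and associativity hold as identities, and then read off the displayed equalities there.
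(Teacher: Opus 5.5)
Your argument is correct, but it is not the route the paper takes. The paper's proof (given for Theorem 3.13 in Section 3) is essentially by construction. It reads off from the symbol table of Section 2 that in the family $g(p',q',0)$ a disjunction goes to the product $\hat p\hat q$ and a conjunction goes to $\hat p.\hat q.'=(\hat p+1)(\hat q+1)+1$. It then simply counts: the $2n$ clause occurrences of the mirrored conjunct become $2n$ products, glued by $2n-1$ binary compositions, and the nesting of these compositions produces the $((\ \cdot\ ).)'$ pattern of (1.2). Soundness of the translation is not checked there; it is inherited from the dualizing isomorphism of Section 2 (renaming $1\mapsto 0$ and reflecting the inputs). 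The additive forms are just the expansion $(a+1)(b+1)+1=ab+a+b=ab.+b$ collected in (3.10).

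You instead prove soundness directly. Evaluating on $\{0,1\}^m$ with T$\,\mapsto 0$, a phrase vanishes exactly when its clause is satisfied, and $ab+a+b$ is the Boolean join. An induction over the bracketing then shows the whole PBE vanishes exactly at the satisfying assignments. This gives a self-contained proof that $\ker(\mathrm{PBE})$ is the solution set, which is what the paper later relies on when it reads YES-instances off kernels. It also makes explicit two points the paper leaves tacit: the particular bracketing in (1.2) is irrelevant, and all identities are to be read modulo $x_i^2=x_i$ (the paper's rule $p^\ell=p$).

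The paper's route, in turn, explains the specific shape of (1.2) and why $g(p',q',0)$ is chosen (clauses become bare products). Your argument does not need to derive either of these.

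One small simplification: you do not need idempotence for the mirrored half. Your induction already shows the composite vanishes iff every leaf vanishes, and duplicated leaves impose no new condition. Idempotence matters only if you want the mirrored polynomial to coincide with the unmirrored one as a Boolean function.
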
  We shall now look at a more complex example, where we use the additive model and here $(\cdot)'$ is used for negations of literals in the logical form \begin{equation}\left\{
\begin{array}{ll}
1. \ a' \lor b \lor c \mapsto a.b c \ ;  (1.\circ 2.) \  (a. b c \circ  a c d) = a.b c + a c d \\
2. \ a \lor c \lor d \mapsto a c d \\
3. \ a \lor c \lor d'\mapsto a c d. \ ;  (3.\circ 4.) \  (a c d. \circ  a c.d) = a c d. +  a c.d\\
4. \ a \lor c' \lor d \mapsto a c.d\\
5. \ a \lor c' \lor d' \mapsto a c.d. \ ;  (5.\circ 6.) \  (a c.d. \circ  b. c. d) = a c.d. +  b. c. d \\
6. \ b' \lor c' \lor d \mapsto b. c. d\\
7. \ a' \lor b \lor c' \mapsto a.b c. \ ;  (7.\circ 8.) \  (a.b c.\circ a.b. c) =   a.b c. + a.b. c \\
8. \ a' \lor b' \lor c \mapsto a.b. c \end{array}\right. \end{equation} We have reversed the repetition of literals at the beginning, giving only addition. \begin{align} (1. 2. \circ 3. 4.)(a.b c + a c d \circ a c d. +  a c. d)= a.b c + a c d + a c d. +  a c. d\\ (5. 6.\circ 7. 8.)(a c.d. +  b. c. d\circ a.b c. + a.b. c)= a c.d. +  b. c. d +a.b c. + a.b. c \end{align}  The final $(1. 2. 3. 4.\circ 5. 6.7.8.)$ produces one non original higher order phrase $ab.c.d$  by multiplication  \begin{align}(a.b c + a c d + a c d. +  a c. d \ \circ \ a c.d. +  b. c. d +a.b c. + a.b. c) \\ = ab.c.d+a.b c + a c d + a c d. +  a c. d + a c.d. +  b. c. d +a.b c. + a.b. c \\
ab.c.d+ac.+c=0 \ \text{by reductions below} \ ab.c.d+ac.+c=0
\end{align}
\begin{equation}\left\{
\begin{array}{ll}
1.\ a.b c +a.b c.=a.b(c+c.)=a.b \\
2. \ a c d + a c d.=ac; \ 2+3+4=ac+ac.+a.c=ac+a+c=ac.+c\\
3. \ a c. d + a c.d.=a c.\\
4.  \  a.b c. + a.b. c=a.(bc.+b.c)=a.b+a.c
\end{array}\right.\end{equation}
 We also give the full composition as in (1.2), to compare $$(a.b c \circ a c d) \circ (a c.d. \circ  b. c. d) \circ (a c.d. \circ  b. c. d)\circ (a.b c. \circ a.b. c)$$
$$=((a.b c). (a c d).)' \circ ((a c.d.). (b. c. d).)' \circ ((a c.d.).  (b. c. d).)'\circ ((a.b c.). (a.b. c).)' =$$
$$((((a.b c). (a c d).).). (((a c.d.). (b. c. d).).).)' \circ (((a c.d.).  ((b. c. d).).). (((a.b c.). (a.b. c).).).)'$$ and finally into two peaces to fit the page $$ \text{PBE} =(((((a.b c). (a c d)).). (((a c.d.). (b. c. d).).).).).$$ $$  (((((a c.d.).  ((b. c. d).).). (((a.b c.). (a.b. c).).).).).)'=0.$$ In the next section, we provide background and a review of the formulation of the PBNF-transform, including a table of possible Selectors (inputs) and more on the geometrization of the unit square for higher arity. In Section 3, we return to the analysis of SAT problems, now in detail.  \section {The Table of Polynomial Symbols and the binary extension of PBNF explained} \begin{scriptsize}
\begin{tabular}  {||c | c |c |c |c ||}
\hline
Op$_{1} \mapsto $  & PB $ f(p|q, 1)$&PB$' f(p|q,1')$ & PB$_{(.)}
f(p'|q',1)) $ & PB$_{(.)}' f(p'|q',1')$ \\ [0.5ex]
\hline\hline
$ = p \mapsto 0+p$ & $p(10)$ &$p(01)$ & $p(01)$& $p(10)$\\
\hline
$ \lnot p\mapsto p' $ & $p(01)$ &$p(10)$ & $p(10)$& $p(01)$\\
\hline
$ - p \mapsto p+p$ & $p(00)$ &$p(11)$ & $p(11)$& $p(00)$\\
\hline
$ +p\mapsto p'+p  $ & $p(11)$ &$ p(00)$ & $p(00)$& $p(11)$\\
\hline
\hline
Op$_{2, p+q} \mapsto$  & PB $ f(p+q, 1)$&PB$' f(p+q,0)$ & PB $_{(..)}
f(p'+q',1)) $ & PB $_{(..)}' f(p'+q',0)$ \\ [0.5ex]
\hline\hline
$ p $ & $p(1100)$ &$p'$ & $p'$& $p$\\
\hline
$ p' $ & $p'(0011)$ &$p$ & $p$& $p'$\\
\hline
$ q $ & $q(1010)$ &$q'$ & $q'$& $q$\\
\hline
$ q' $ & $q'(0,1,0,1)$ &$ q$ & $q$& $q'$\\
\hline
$\hat p| \hat q$ & $\hat p+\hat q $ & $(\hat p+\hat q)'$ & $\hat p+ \hat q$ & $(\hat p+\hat q)'$ \\
\hline
$\hat p\Leftrightarrow \hat q$ & $(\hat p+ \hat q)' $ & $\hat p+\hat q $ & $(\hat p+\hat q)'$ & $\hat p+\hat q $ \\
\hline
\hline
Op$_{2, \hat p \hat q} \mapsto$  & PB $ g(\hat p,\hat q,1)$& PB$' g(\hat p, \hat q,0)$ & PB$_{(..)}g(\hat p', \hat q', 1)) $ & PB$_{(..)}'  g(\hat p',\hat q',0)$ \\ [0.5ex]
\hline
\hline
$\hat p\lor \hat q $& $\hat p.\hat q.'$ &$\hat p.\hat q.$ & $\hat p\hat q'$& $\hat p\hat q$ \\
\hline
$\hat p\land \hat q$ & $\hat p\hat q$ & $\hat p\hat q'$ & $\hat p.\hat q.$& $\hat p.\hat q.'$\\
\hline
\hline
$\hat p\Rightarrow \hat q$ & $\hat p \hat q.'$ & $\hat p\hat q.$ & $\hat p.\hat q'$ & $\hat p.\hat q$ \\
\hline
$\hat p\not\Rightarrow \hat q$ & $\hat p \hat q.$ & $\hat p\hat q.'$ & $\hat p.\hat q$ & $\hat p.\hat q'$ \\
\hline
$\lnot \hat p\Rightarrow \lnot \hat q$ & $\hat p.\hat q'$ & $\hat p.\hat q$ & $ \hat p \hat q.'$ & $ \hat p \hat q.$ \\
\hline
$\hat p\Leftarrow \hat q$ & $\hat p.\hat q'$ & $\hat p.\hat q$ & $\hat p \hat q.'$ & $\hat p\hat q.$ \\
\hline
$\hat p\not\Leftarrow \hat q$ & $\hat p.\hat q$ & $\hat p.\hat q'$ & $\hat p \hat q.'$ & $\hat p\hat q.$ \\
\hline
$ \lnot \hat p\Leftarrow \lnot \hat q$ & $\hat p\hat q.'$ & $\hat p\hat q.$ & $\hat p.q'$ & $\hat p.q$ \\
\hline
\hline
$\hat p\downarrow \hat q$ & $\hat p. \hat q.$ & $\hat p.\hat q.'$ & $\hat p\hat q$& $\hat p\hat q'$\\
\hline
$\hat p\uparrow q$ & $\hat p \hat q'$ & $\hat p \hat q$ & $\hat p.\hat q.'$ & $\hat p.\hat q.$ \\ [1ex]
\hline
\end{tabular}  \end{scriptsize}

In the table above, we show the polynomial symbols, and here $p|q =: p$(xor)$q$, for the singular and binary connectives; now, in a new shorthand compact writing $p'=(p+1)$ and $q'=(q+1)$, while we, in the example $\hat p\lor \hat q \mapsto (\hat p+1)(\hat q+1)+1=\hat p.\hat q.' $ use an inner $". "$ (dot) complementation, and $"'"$ ends the formula. We also use dots in the operator space instead of (inner) parentheses, as in the examples to come.
To address higher-arity problems, we extend our formulation of the propositional calculus, a bijective transfer of the ordinary 16 binary logic operators, which we describe in [2] and [3] and write above in a more suitable way for this purpose. In the table, we fix $\ell \in \mathbb{N}$ as the number of letters (unbounded arity) and extend $\hat p$ and $\hat q$ to multinomials, denoting here any collection of monomials. We define the \emph{polynomial bundles} $[g_*] $ as equivalence classes that implement the logical operator $* \in$ in all possible modes.  \begin{align} [g_{*}] =  g(\hat p * \hat q): \sum_{p,q,r,s \cdots s_{\ell}}= \hat p\hat q + \hat p+\hat q+ a ,  \text{under \ the \ choise of} \\ \nonumber \text{Selectors:}  \ p_1(1100),\ p_2(1001), \ p_3(0110), \ p_4(0011), \ q_1(1010), \ q_2(0101) \\ \nonumber  \text{renaming \ of \ truth \ value \ 1|0, \ inner} \ (\cdot). \ \text{and} \ (\cdot)' \ \text{outer complementation.} \end{align} Together with the Selectors, we show above the connected binary number string.
The extension to the \emph{binary model case} is due to the Disjunctive Boolean Normal Form, DBNF(or dnf).\begin{definition} Let DBNF, Boolean Disjunctive Normal Form, be a statement form $\mathcal{S}_{c(\lor (\land ), \ell)}$ given by a disjunction of conjunctions in a standardized way, with disjunctions binding modes of collected conjunction, only using $\land, \lor$ and $\lnot$, where $\lnot$ must accompany a statement letter, as in the right side of conversions 4 and 5. below. We use the following rules, with translation to PBNF $g(p,q,1)$ in the compact way \begin{equation}\left\{
\begin{array}{ll}
1.\ \lnot(\lnot x) = x \mapsto p.'=p  \\
2.\ x \land x=x \mapsto pp=p\\
 x \lor x=x \mapsto p.p.'=p\\
3. \ x \land y=y \land x  \mapsto pq=qp \\
 \ x\lor y= \ y\lor x \mapsto  p.q.'=q.p.' \\
4. \ \lnot(x \land y)=\lnot x \lor \lnot y \mapsto pq'=(p.).(q.).' \\
5. \ \lnot(x \lor y)=\lnot x \land \lnot y  \mapsto (p.q.')'=p.q.\\
6.\ x\land(y \lor z)\equiv (x \land y) \lor (x \land z) \mapsto p(q.r.)'=(pq).(pr).'\\
7. \ x\lor(y \land z)\equiv (x \lor y) \land (x\lor z)\mapsto p.(qr).'=((p.q.).)(p.r.)'\\
8. \ n x^{\ell} + (n-1)x^{\ell}=x \ \mapsto np^{\ell}+(n-1)p^{\ell}=p \ (\text{DBNF}).
\end{array}\right. \end{equation} \end{definition} Numbers 6 and 7 can be used to test the polynomial calculations with these shorthand notations. The connectives $\land, \lor$ and $\lnot $ are fundamental in Propositional Calculus (PC), in fact, they are "all we ever need", to quote Quine [9, page 43], where he discusses higher arity. He concludes in the chapter named "Reduction to Three Primitives", that \emph{every mode of composition describable by a truth table is translatable into denial, conjunction, and alternation}. This is clearly visible in the Boolean sex-tuple with equivalence classes written below, where $\mathcal{PB}$ is the House of Polynomial Families  \begin{align}\langle \mathcal{PB}, \  \land \cong [\hat p \hat q \ \hat p \hat q' \ \hat p. \hat q. \ \hat p. \hat q.'], \ \lor \cong  [\hat p.\hat q.' \hat p. \hat q. \ \hat p \hat q' \ \hat p \hat q], \ (\cdot)'  \cong \ [\hat p.\ \hat p], [01, 10] \rangle. \end{align} We note that the equivalence classes are reflections of each other, even the complement. Here we map the logical operators to the ring of polynomial functions we call a four-place Boolean coordinate ring of I$^2$ (the unit square), visualizing the coordinates as ordered areas in the unit square. Namely, DBNF is a standard representation of propositional statements of any arity to form disjunctions of conjunctions; thus, we have a mapping from the space $\mathcal{OP}$ of logical operators and also from DBNF  \begin{align}h: \mathcal{OP} \longrightarrow \ \text{DBNF, with} \ h(\hat p *\hat q)=\hat p *_{dnf}\hat q \\ \kappa: \ \text{DBNF} \longrightarrow \ \text{PBNF, with}\ \kappa(\hat p *_{dnf} \hat q)=[(\hat p+1)(\hat q+1)] \in  \mathcal{PB}\\  g=h \circ \kappa: \mathcal{OP} \longrightarrow \text{PBNF},\ \text{defined by} \ g(\hat p *\hat q)= [(\hat p+1)(\hat q+1)]\in  \mathcal{PB}, \end{align}to the equivalence class $[(\hat p+1)(\hat q+1)] \in  \mathcal{PB}$.  We shall use the composite function $g$ as a direct conversion $$g: \mathcal{OP}\supset \ \text{CNF} \ \to g(p',q',0) \in \ \text{PBNF}$$ and we shall later motivate the choice of polynomial family.  
For the background of our polynomial formulation of PC, we connect to the \emph{principles of duality}; see Church in [4], a dualization between two different operator spaces, where a dual statement can be used as an alternative in theorem proving, with the usual logical axioms and rules of inference. It looks like the following \begin{align}\text{Op} \ni (\land, \lor, \Rightarrow,\lnot, 1) \mapsto (\lor,\land \not \Leftarrow, \lnot, 0) \in \text{Op}{(..)}',\end{align} for some example connectives in the normal operator space Op (T=1, F=0) transferred to the operator space $\text{Op}{(..)}'$.
We get, for example, for dualization of the conjunction above, which we shall do here too, where $\land$ is changed to $\lor$ \begin{align} \ (x\land y) \mapsto (x\land y) \lor (x \land y') \lor (x' \land y).\end{align} Next, we convert the DBNFs into linear and quadratic polynomial families in $\mathcal{PB}$ (see [2] and [3] for more information). As we have two non-homeomorphic domains for singular (I =: unit line) and binary operators (I$^2$ = : unit square) (Definition 2.3), we use a composite transfer function $f^{\text{I}}(g_{(\text{I}^2)}),$ where $f^{\text{I}} \mapsto(1|0)$ is the \emph{characteristic} function that adds 1 to the $g$-resulting
polynomial, and here $\hookrightarrow$ is the \emph{fiber}(bijective return) to Op, if we do not get a numeric result,  1|0 \begin{align} f^1(g_{\text{I}^2}):\mathcal{OP} \to (\mathbb{Z}_2 |\mathcal{PB}_{(\mathbb{Z}_2)}) \ \text{defined by} \\ f^1(g_{\text{I}^2})
 =(a_1pq+a_2p+a_3q+a)(+1) \\ = \ ((1|0)
\big{|}g(p_j,q_k,a), g(p_j,q_k,a)\hookrightarrow
\mathcal{OP}, \\ a_{n=1,2,3} =0|1, a=1|0, \ j=1,2,3,4\land k=1,2. \end{align}
We then get for the normal, (T=1), polynomial family  $g_{(p_1, q_1, 1)}$ in (2.8) \begin{align}g_{(p_1,q_1,1)}(x \land y)= pq \ \text{for \ LS \ and \ for \ RS:} \\ g_{(p_1,q_1,1)}((x\land y) \lor (x\land y') \lor (x'\land y))\\ =pq+p(q+1)+(p+1)q  = pq+p+q  \hookrightarrow_{\text{Op}{(..)}'} (p \land q). \end{align} We can use addition for disjunctions in (2.14) as the conjuncts are disjoint, $1000+0100+0010=1110$. Now, the first polynomial (LS) is true for $p=q=1$ with T=1, and the last (RS) is true for $(11,10,01) $; this can be used as the defining property for disjunction, max $(p_j,q_k):=\lor$ and min$(p_j,q_k)=:\land$, for more on this max/min formulation, see [1]). To get LS=RS, we have to negate(reflect) both $p$ and $q$ and also the whole polynomial $pq+p+q$ in (2.14) $$(p.q.+p.+q.)'=((p+1)(q+1)+(p+1)+(q+1))'=(pq+1)'=pq$$ and that is what happened in (2.8). This could be generalized by writing $(1000 \parallel 0001 +1111 =1110)$ or \begin{align} x_1x_2x_3x_4 \parallel x_4x_3x_2x_1 +1111 =(x_4x_3x_2x_1)'.\end{align} Here we use $\parallel$ to denote reflection, and we often write the output of an operator as a string of digits, $x_1x_2x_3x_4\cdots x_n$.
We conclude that we have an \emph{isomorphic} mapping $\land \mapsto \lor$ above by renaming truth values $1 \mapsto 0$, and using reflection (for the $p$-input $1100\parallel 0011$ and for the $q$-input $1010 \parallel 0101$. We use this isomorphic mapping in the SAT problems, with the family $g(p',q',0)$, to dualize clauses to phrases and conjunctions to disjunctions, using the PBNF-transform to form polynomial equations to substitute for the logical forms.
The singular operator above in (2.8) will also play an important role; we have ($\lnot) \mapsto (\lnot)$ as  $\text{Op}_1 \ni x' \mapsto ((x'){.})' \in \text{Op}{.}', $ and for the polynomial \begin{align} \lnot \mapsto  \ p+1 \mapsto ((p+1)+1)+1=p+1, \hookrightarrow \lnot \end{align} so here are two complementation, these canceling out makes $\lnot$  \emph{self-dual}, so that $\lnot$ is a fix point into $g(p',q',0)$. This means the negations in the transferred clauses remain the same, so $(\hat p \lor \lnot \hat q) \mapsto_{g(p',q',0)} \hat p \hat q$ (see the table above, or [3]).
Adding 1 swaps the truth values: $1+1$ becomes 0; thus, the normal space becomes the complement space. This also helps us, since the polynomial equations are now zero when true, so they are in normal form after simplification and reduction.
Below, we present the connections for the four binary families in the table, now in lattices, to give another view.$$\boldsymbol{\Downarrow} \ \text{PB}': g(p,q,0)$$ \begin{scriptsize} $$Trivial \ operator \bigcirc (1111) pq+p(q+1)+(p+1)q+(p+1)(q+1)=1$$
$$ \text{3-}area \ operators  \bigcirc  (1110) p.q.' \ A_4 ' \ \bigcirc  (1101) (p+1)q+1 \ A_3 ' \ \bigcirc  (1011) p(q+1)+1 \ A_2' \ \bigcirc  (0111) pq+1 \ \ A_1 '$$ $$ Selectors { \bigcirc   \ (1100) p \bigcirc  \ (1010) q \bigcirc (0110) p+q \ \bigcirc \ (1001) p+q+1  \bigcirc  (0101)q+1  \bigcirc \ (0011) p+1 } $$ $$ Unit\text{-}areas \bigcirc  (1000) pq \ A_1 \ \bigcirc  (0100) p(q+1) \ A_2  \ \bigcirc  (0010) (p+1)q \ A_3  \ \bigcirc  (0001) (p+1)(q+1) \ A_4  $$
$$ Trivial \ operator \bigcirc \  (0000)\ pq+p(q+1)+(p+1)q+(p+1)(q+1)+1=0$$ \end{scriptsize} $$\boldsymbol{\Uparrow} \ \text{PB} : g(p,q,1)$$ The arrows define the read direction. Here the notation $A_{n=1,2,3,4}$ of different areas refers to the geometrization of the unit square, discussed later in this Section. In the lattice, we replace the operator symbols for the connectives with polynomial \emph{symbols}, and we call the first line, $pq, p(q+1), (p+1)q$ and $(p+1)(q+1)$, the unit symbols. These 16 binary symbols capture all the properties of the connectives they represent via polynomial denotation. For example take $p(q+1)+1=pq.'$, at once we see that this is correct whenever $p=0$ (false) and when $p=1$ and $(q=1)$ for $ T=1$. We also note that $(p+1)(q+1)+1$ is true in all cases but $p=q=0$. \footnote{This can be compared to the operator symbol $\Rightarrow$, which does not contain such information, but could be regarded as a \emph{skeumorphism}, none of the other operator symbols contain any (correct) information at all; many are not even skeumorphisms, such as $\land, \lor, \uparrow, \downarrow$, etc.} The Trivial operators are given here as the sum of the unit symbols or the complement of this sum. For the whole lattice, we note that $x\leq y$ iff $xy=x$, which follows from the construction. Following $1000 - pq \to 1100 -p,$ gives $1000 \cdot 1100=1000,$ and for the polynomial $pq\cdot p=pq$. The next step gives $1100 \cdot 1110=1100,$ and $p\cdot (p+1)(q+1)+1=p.$
Now we reflect  PB$_{(..)} \parallel \text{PB}'_{(..)}.$
$$\text{PB}_{(..)}': g(p',q',0)\Downarrow $$\begin{scriptsize}$$ \bigcirc (1111) pq+p(q+1)+(p+1)q+(p+1)(q+1)=1$$
$$ \bigcirc (0111) pq+1  \quad \bigcirc (1011) p(q+1)+1 \quad \bigcirc (1101) (p+1)q+1 \quad \bigcirc (1110) (p+1)(q+1)+1\quad$$ $$ \bigcirc (0011) p+1 \quad \bigcirc (0101) q+1 \quad   \bigcirc (1001) p+q+1  \quad \bigcirc (0110) p+q  \quad \bigcirc (1010) q \bigcirc (1100) p  $$ $$ \bigcirc (0001) (p+1)(q+1)  \quad \bigcirc (0010) (p+1)q \quad \bigcirc (0100) p(q+1) \bigcirc \quad (1000) pq \quad $$ $$  \bigcirc (0000) pq+p(q+1)+(p+1)q+(p+1)(q+1)+1=0$$ \end{scriptsize}
$$\text{PB}_{(..)}: g(p',q',1)\Uparrow $$ We have changed the selectors to $p_4 (0011)$ and $q_2(0101)$, which means adding 1 to both $p$ and $q$, and for PB$_{(..)}'$ there is also a change in the reading $1'=0$.  We note that reflections $(\parallel)$ come from the complement of the inputs.
Now we discuss the general case, which is of the most theoretical interest.
\begin{definition}We visualize each selector as an area $A^{\ell}$, where $\ell$ counts the number of selectors in the unit square \begin{align} \text{I}^2_{\ell}=(x_1\in[0,1]\times[0,1]\ni x_2) \land ((x_1,x_2)\in \mathbb{R}^2). \end{align} These $\ell$-areas intersect all other $\ell$ -areas at some point; the intersection $A^1\cap A^2 \cap \cdots \cap A^{\ell} \not = \emptyset,$ is defined as the number string $(1000\cdots 0).$ The unit square  I$^2$ then becomes divided into $n=2^{\ell}$ disjoint unit areas $a_j \cap a_k = \emptyset$, where the $n$:th unit area $a_n$ is the complement unit-area, $a_n(000 \cdots 001)\not \subset A^{\ell} $, and thus have the unit-symbol $p.q.r.s.\cdots s_{\ell}.$ which is fully \emph{saturated}, just containing negated literals, denying any contact with the $\ell-$areas.  \end{definition}  For the binary operators, we represent the selectors $p_1(1100)$ and $q_1(1010)$ as two areas in I$^2$ with a non-void intersection and complements that do not contain each other. We let in the binary model case $$(\hat p \cap \hat q)\equiv(\hat p \land \hat q)\cong [\hat p  \hat q]; (1100 \cap 1010) =1000 $$ and we shall here identify the set-theoretic operations of $\cap$ and $\cup$ with the logical symbols $\land$ and $\lor$ as they give the same truth tables; see [11] for the details. There are $2^{2^\ell}$ members, called connectives or operators, of the type $op_{\boldsymbol{x}}$ in the operator space $\mathcal{OP}^{\ell}$ where $\ell$ thus is the number of letters $\ell=|p,q,r,s,p_1,q_1,r_1,s_1\cdots s_{n}|.$ \begin{definition}We put the topology of \emph{lexicographical} (dictionary) order[8], defined by \begin{align}((a,b)<(c,d)) \land  ((a=c) \land (b<d))\end{align} or for descending order as \begin{align}((a,b)>(c,d)) \land \ ((a=c) \land (b>d))\end{align}  on the unit square. We let the four place number string $x_1x_2x_3x_4$, or vector $(x_1, x_2, x_3, x_4)$, be ordered so that we have $x_1<x_2<x_3<x_4$.\end{definition}
 This means we begin the area notation with $x_1$, the first basis vector (1000), and by definition we get four disjoint unit areas, meaning their binary strings share no common 1s. We then choose the intersection $p\land q$ as a unit area 1, denoted $a_1$.  With this first choice $pq$ we follow the topology again to get $(2) p(q+1),(3)(p+1)q$ and $(4)(p+1)(q+1)$ now in descending order. If we compare $p(1100)$ and $q(1010)$, each number represents a unit area, $a_{n=1,2,3,4}$, with $p$ to the left of $q$, so it is clear which areas belong to $p$, to $q$, or to both. Each binary operator corresponds to one or several specific unit-areas; for example, the disjunction $\lor(pq+p+q)$ has the number string (1110), so it occupies every area except $a_4$, while the non-conjunction $\downarrow (0001), pq+1,$ occupies just one, $a_4$, the complement unit area. The conditionals are the strings $$\frac{\Rightarrow 1011}{\Leftarrow 1101}\mapsto pq+\frac{(p+1)q}{p(q+1)}+(p+1)(q+1)=\frac{pq+p+1}{pq+q+1}=\frac{p(q+1)+1}{(p+1)q+1}.$$  The computation in the vector case becomes
$$(1,0,0,0)+\frac{(0,0,1,0)}{(0,1,0,0)}+(0,0,0,1)=\frac{(1,0,1,1)}{(1,1,0,1)}.$$ We also use this vector view, but we call it adding and multiplying number strings, modulo 2. \begin{definition} If $p=q$ or $p\subset q \ | \ q\subset p$, in the case of singular operators, we use the one-dimensional unit line I$=[0,1]$, to get two defining intervals, $p$ and $\lnot p,$ by instead considering the quotient topology I$^{2}/\sim$ = : I, identifying the points $(0,1)=(0,0)$ and $(1,1)=(1,0).$\end{definition}  \begin{remark} In the binary model case, we see this as a four-place coordinate ring of I$^2$, where we visualize the coordinates as ordered areas, this can be seen as the basis vectors $(1,0,0,0), (0,1,0,0), (0,0,1,0), (0,0,0,1)$. We consider the other operators as linear combinations of the basis vectors. For example, the biconditional can be composed as $$ 1+4=(1,0,0,0)+(0,0,0,1)=(1,0,0,1). $$ In fact, the vector space and the unit-areas are isomorphic, but we prefer the geometric approach; it makes the other representations (operators) and different possibilities easier to present. \end{remark} \begin{definition} The PBNF-transform uses only polynomial multiplication and addition,  + and $\cdot$, modulo 2, defined by for the binary model case $$\mathcal{PB} \times \mathcal{PB} \to \mathcal{PB}:   \ [g{*}] \ni ( +: p_x+p_y=+p_x p_y) \ \text{and} \ (\cdot \ : p_x \cdot p_y=p_xp_y) \in [g{*}]) $$ $$  \ \sum^{2n}_1 p_{x_j}+ \ldots + p_{x_k}=0, j=k, \sum_1^{2n+1} p_{x_j}+ \ldots + p_{x_k}=p_{x_k}, j=k ; \  \ \pi_{x_1} (p_{x_1})^{\ell}=p_{x_1}. $$  We regard the repeated multiplication as a projection $\pi: \mathbb{R}^{\ell} \to \mathbb{R}$ on the first coordinate $\pi(x_1, x_2\cdots, x_{\ell})=x_1$ numbering the ingoing terms in the exponent. Repeated addition is idempotent for an odd number of terms, but annihilates for an even number. We identify addition with subtraction in $\mathbb{Z}_2$. For the inverse of $p$, we use $p.$ or $(p+1)$; we can call this "conjugate" instead of "negate" the literal or statement. \end{definition}
The \emph{Selectors}, which we index here as $p_j, q_k$, $j=1,2,3,4; k=0,1$, are commonly used only as $p$ and $q$, but here we use all possibilities.
In the table below, we present the different inputs; we note that the diagonal and the anti-diagonal with 1s
and 0s, which we cannot use.\begin{center}
\begin{tabular}
{| c| c | c | c | c || c | c |}
\hline
$\circ$ & $p_11100$ & $p_21001$ & $p_30110$ & $p_40011$ &
$q_11010$& $q_20101$\\
\hline
$p_11100$ & $0$ & $p_1p_2$ & $p_1p_3$ &$1$ & $p_1q_1$&$p_1q_2$  \\
\hline
$p_2$1001 & $p_2p_1$ & $0$ & $1$ & $p_2p_4$& $p_2q_1$&$p_2q_2$ \\
\hline
$p_3$0110 & $p_3p_1$ & $1$ & $0$& $p_3p_4$& $p_3q_1$& $p_3q_2$ \\
\hline
$p_4$0011 &$1$ &$p_4p_2$  & $p_4p_3$ & $0$& $p_4q_1$& $p_4q_2$ \\
\hline
\hline
$q_1$1010& $q_1p_1$ & $q_1p_2$ & $q_1p_3$& $q_1p_4$&$0$ & $1$\\
\hline
$q_2$0101& $q_2p_1$& $q_2p_2$ & $q_2p_3$ & $q_2p_4$&$1$&$0$ \\
\hline \end{tabular} \end{center} Note that $p$ has four faces $(1100-p_1, 1001- p_2, 0110-p_3, 0011-p_4,),$ but $q$ can have only two $(1010-q_1, 0101-q_2)$ . (We also use dictionary order here to get $p_1'=p_4$). Also note that the selectors do not commute; a polynomial commutes as a symbol, as $p(q+1)=(q+1)p$ regarded as real number, but not as selectors, because a selector-combination is considered as an ordered pair $\{p,q \}=\{\{p \},\{ pq \}\}$ again in dictionary order as a matrix, e.g., $\begin{smallmatrix} p& q \\1& 1 \\ 1 & 0 \\ 0& 1 \\ 0 & 0 \end{smallmatrix} \not = \begin{smallmatrix} q& p \\1& 1 \\ 0 & 1 \\ 1& 0 \\ 0 & 0 \end{smallmatrix}.$ \begin{lemma} Let the Polynomial House, $\mathcal{PB},$ contain polynomial forms, $\emph{symbols},$ representing each an operator in $\mathcal{OP}$. The  $n=2^{\ell}$ \emph{unit symbols} are connected to a unit area member, a unit operator, in the operator space $\mathcal{OP}$ by its range $\textbf{x}(x_1,x_2,\ldots,x_i, \ldots, x_{n})$, and $(x_i=1\land x_j=0, i\not =j)$ is the i:th of the $2^{\ell}$ unit areas in the unit square. The different operators, or area combinations, are described by \emph{DBNF}, Disjunctive Boolean Normal Form, as a disjunction of phrases where we write $x_i\land x_j =x_i x_j$ in  $$(x_1 x_2  \cdots  x_{n})\lor (x_1 x_2  \cdots  x_{n}') \cdots\lor (x_1 x_2'  \cdots  x_{n}') \lor(x_1' x_2' \cdots x_{n}')=1$$ where $(\cdot)'$ indicates the complement.\end{lemma}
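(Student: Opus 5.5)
We would prove the lemma by reducing it to the standard fact that the $2^{\ell}$ minterms over $\ell$ letters form a partition of unity in the Boolean ring, and then reading that fact geometrically through Definition 2.3. First we fix $\ell$ and the Selectors, taken in lexicographic order as in Definition 2.4. These generate, as iterated products of literals $p$ or $p.$, $q$ or $q.$, and so on, exactly $n=2^{\ell}$ monomials. We call them the unit symbols, for example $pq\cdots s_\ell,\ pq\cdots s_\ell.,\ \ldots,\ p.q.\cdots s_\ell.$.

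Next we show that each unit symbol has as range a number string $\mathbf{x}$ with exactly one $1$, in position $i$. This is by induction on $\ell$. The base case $\ell=2$ is the lattice computation already displayed in Section 2, namely $pq\mapsto(1000)$, $p(q+1)\mapsto(0100)$, $(p+1)q\mapsto(0010)$, $(p+1)(q+1)\mapsto(0001)$. For the step, we multiply a unit symbol by a new letter $r$ or by $r.=r+1$. This splits its single $1$ into the two halves of the doubled string, because $r$ and $r.$ have complementary strings and the product $pq\cdots r$ is computed componentwise. Distinct unit symbols differ in at least one literal $x$ versus $x.$, and $x\cdot x.=x(x+1)=x^2+x=0$ by the idempotence rule of Definition 2.6. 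Hence the unit symbols are pairwise orthogonal, and the $2^{\ell}$ strings are exactly the basis vectors $e_i$, that is, the disjoint unit areas $a_i$ of Definition 2.3. This gives the bijection between unit symbols and unit operators of $\mathcal{OP}$.

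Then we derive the DBNF statement. Any operator is a $0/1$ string $\mathbf{y}$, so it equals $\sum_{y_i=1} e_i$. Because the unit areas are disjoint, addition mod $2$ coincides with disjunction, as in the justification of (2.14): $1000+0100+0010=1110$. So every area combination is a disjunction of phrases, which is its DBNF. For the displayed identity we expand
\[\prod_{x\in\{p,q,\ldots,s_\ell\}}\bigl(x+(x+1)\bigr)=\prod 1=1 .\]
The left side, expanded, is the sum of all $2^{\ell}$ unit symbols. Orthogonality turns this sum into the disjunction of all phrases, and the result is the trivial operator $(11\cdots1)=1$, as in the top row of the lattice.

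The main obstacle is notational rather than mathematical. The lemma writes its phrases in terms of the coordinates $x_i$ and their complements, whereas the unit symbols are built from letters. We would therefore make explicit the identification, from the Remark, between coordinates and areas, so that the $i$:th phrase is read as the indicator of $a_i$. We would also check that the lexicographic ordering puts the saturated symbol $p.q.\cdots s_\ell.$ last, as $a_n$.
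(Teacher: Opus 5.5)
Your proposal is correct and takes essentially the same route as the paper. Both argue by induction on the number of letters that each minterm (unit symbol) carries a one-hot string, e.g.\ $x_1x_2(1000),\dots,x_1'x_2'(0001)$. Both then conclude that, since the unit areas are disjoint, the disjunction (equivalently the mod-$2$ sum) of all the minterms is the all-ones string. Your orthogonality check $x\cdot x.=0$ and the expansion $\prod_x\bigl(x+(x+1)\bigr)=1$ just make explicit what the paper leaves to its listed strings and its closing remark that the unit areas are disjoint and can be added.
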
\begin{proof} From the lower case we have \begin{align} x(10) \lor x'(01)=(11)\\ x_1x_2(1000)\lor x_1x_2'(0100)\lor x_1'x_2(0010) \lor x_1'x_2'(0001)=(1111)\\ x_1x_2x_3(10000000)\lor x_1x_2x_3'(01000000)\lor \cdots \\ \lor x_1'x_2'x_3 \lor x_1'x_2x_3' \lor x_1x_2'x_3'\lor x_1'x_2'x_3'(00000001)=(11111111) \\ \cdots \cdots \\x_1 x_2  \cdots x_{n}(10\cdots 00)\lor (x_1 x_2  \cdots x_{n}')(01\cdots 00)\lor \cdots \\ \lor x_1 x_2'  \cdots x_{n-1}' x_{n}'(00\cdots 10)\lor x_1' x_2'  \cdots x_{n-1}' x_{n}'(00\cdots 01)=(11\cdots 11)\end{align} so the $n-$case follows by induction. Note that these unit areas are disjoint and can be added.  \end{proof} \begin{lemma}The \emph{DBNFs} transform into \emph{PBNF}, Polynomial Boolean Normal Form, by multiplication for conjunction and addition for disjunction, using $g(p,q,1)$, because all unit areas are disjoint. \begin{align}g((x_1 x_2  \cdots x_{n})\lor (x_1 x_2  \cdots x_{n}')\lor \cdots \lor (x_1 x_2'  \cdots x_{n-1}' x_{n}')\lor(x_1' x_2'  \cdots x_{n-1}' x_{n}'))\\ = p_1p_2\ldots p_{n} + p_1p_2\cdots p_{n}.+\cdots  +p_1p_2.\cdots p_{n-1}.p_{n}.+ p_1.p_2.\cdots +p_{n-1}.p_{n}. \end{align}\end{lemma}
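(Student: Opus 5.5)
The statement is the last lemma above: the DBNF of full minterms transforms under $g(p,q,1)$ into the sum of the corresponding monomials. I will build the map from its definition in (2.8)--(2.10), treat conjunction and disjunction separately, and then assemble them.

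\emph{Conjunction.} Under $g(p,q,1)$ with truth value $T=1$, a conjunction $x_1\land x_2$ goes to the product $p_1p_2$, as in the table of polynomial symbols. On $\{0,1\}$ the product of two values is exactly their minimum, so it agrees with $\land$. A negated letter $x_i'$ goes to $p_i+1 = p_i.$, which is the complement and agrees with $\lnot$. By induction on the length, using associativity of multiplication in $\mathbb{Z}_2[p_1,\dots,p_n]$, each phrase $x_1^{\epsilon_1}\cdots x_n^{\epsilon_n}$ goes to the monomial form $p_1^{\epsilon_1}\cdots p_n^{\epsilon_n}$, with dotted letters in the negated places. This polynomial evaluates to $1$ at exactly one assignment. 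Hence its number string is the unit vector that the previous Lemma identifies with the corresponding unit area.

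\emph{Disjunction.} Here I use the general identity $g(\hat p\lor\hat q)=\hat p\hat q+\hat p+\hat q$, which follows from row $\lor$ of the table and (2.14). The key observation is that distinct full phrases are mutually exclusive. Two distinct minterms differ in some letter $x_i$, so their product contains a factor $p_i\,p_i. = p_i(p_i+1) = p_i^2+p_i$. By idempotence $p_i^2=p_i$ (the projection rule in Definition 2.6), this factor is $0$. Therefore $\hat p\hat q=0$, and the disjunction reduces to the plain sum $\hat p+\hat q$. This is the area argument of the previous Lemma: the unit areas are disjoint, so their strings share no common $1$s, and mod 2 addition is the same as the union, as in the computation $1000+0100+0010=1110$.

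\emph{Assembly.} I iterate over the $2^{n}$ phrases by induction on the number of disjuncts. The partial disjunction $D_k$ is a sum of $k$ distinct minterms, and a new minterm $m$ is disjoint from each of them. So $m\,D_k=\sum m\,m_i=0$, and $g(D_k\lor m)=D_k+m$. This yields the displayed sum of the lemma. As a check, the whole sum equals $\prod_i(p_i+p_i.)=\prod_i 1=1$, which is consistent with the previous Lemma's value $(11\cdots1)$.

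The main obstacle is the disjunction step. One must verify that the cross term $\hat p\hat q$ truly vanishes as a polynomial, and not merely pointwise. This needs the reduction $p_i^2=p_i$ to be applied, that is, the work must be done in the Boolean quotient ring $\mathbb{Z}_2[p_1,\dots]/(p_i^2+p_i)$. I would state this explicitly, since the paper's "$\pi$" convention is exactly this identification.
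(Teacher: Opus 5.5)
Your proposal is correct and follows the paper's own argument. The paper's one-line proof identifies $\land$ with $\min$ (the product) and $\lor$ with $\max$, and observes that the case $\max(1,1)$ never occurs for disjoint unit areas; this is exactly your vanishing cross term $\hat p\hat q=0$ in $g(\hat p\lor\hat q)=\hat p\hat q+\hat p+\hat q$, and you additionally make explicit the induction over the disjuncts and the reduction $p_i(p_i+1)=p_i^2+p_i=0$ in the Boolean quotient ring, which the paper leaves implicit.
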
 \begin{proof} We translated the DBNFs into multiplication and addition, which was possible for the unit areas because of the definitions $\land:=$ min$(p_j,q_k)$ and $ \lor:$=max$(p_j,q_k)$. The case $\lor:=$ max$(1,1)$ was non-existent. For more on this max-and-min formulation of logical statements, see [1]. \end{proof}  \begin{definition} A Truth-Table is a tableau of $\ell$ letters and $2^{\ell}$ of lines of number strings. We always start with the full homogeneous case of ones, and end with the homogeneous case of zeros, as follows. Each line of numbers is called a numbered case: \emph{Case j}: $1\leq j\leq 2^{\ell}$
$$\begin{smallmatrix}  & x_1 & x_2 & x_3 & x_4 & x_5 & x_6 & x_7 & x_8 & x_9 & x_{10} & x_{11} & x_{12} & x_{13}& x_{14} & x_{15} & x_{16}& \cdot & \cdot & \cdot  & x_{\ell}& S_{c(k,\ell)} \\  Case \ 1 & \boldsymbol{1} & \boldsymbol{1} & 1 & 1 & 1 & 1 & 1 & 1 & 1 & 1 & 1 & 1 & 1 & 1 & 1 & 1& \cdot & \cdot & \cdot  & 1 & ? \\Case\ 2& \boldsymbol{1} & \boldsymbol{0} & 1 & 1 & 1 & 1 & 1 & 1 & 1 & 1 & 1 & 1 & 1 & 1 & 1 & 1 & \cdot & \cdot & \cdot& 1 & ?  \\  Case \ 3 & \boldsymbol{0} & \boldsymbol{1} & 1 & 1 & 1 & 1 & 1 & 1 & 1 & 1 & 1& 1 & 1 & 1 & 1& 1 & \cdot & \cdot & \cdot & 1 & ? \\Case \ 4 & \boldsymbol{0} & \boldsymbol{0} & 1 & 1 & 1 & 1 & 1 & 1 & 1 & 1 & 1 & 1 & 1 & 1 & 1 & 1 & \cdot & \cdot & \cdot &  1 & ?\\ & \cdot & \cdot & \cdot& \cdot & \cdot & \cdot & \cdot & \cdot & \cdot& \cdot & \cdot & \cdot &  \cdot & \cdot & \cdot& \cdot & \cdot & \cdot & \cdot & \cdot &  ?   \\Case\ j&\cdot & \cdot & \cdot&  \cdot & \cdot & \cdot& \cdot & \cdot & \cdot& \cdot & \cdot & \cdot & \cdot & \cdot & \cdot& \cdot & \cdot & \cdot & \cdot & \cdot & ?  \\ &\cdot & \cdot & \cdot&  \cdot & \cdot & \cdot& \cdot & \cdot & \cdot& \cdot & \cdot & \cdot & \cdot & \cdot & \cdot& \cdot & \cdot & \cdot & \cdot & \cdot & ? \\Case \ 2^{\ell}& 0 & 0 & 0 & 0 & 0 & 0 & 0 & 0 & 0 & 0 & 0 & 0 & 0 & 0 & 0 & 0& \cdot & \cdot & \cdot & 0 & ?\end{smallmatrix}$$ \end{definition} \begin{theorem}Every mode of composition, $S_{c(k,\ell)},$ describable by a truth table is translatable into terms of denial $(\lnot)$, conjunction $(\land)$, and alternation $(\lor).$ \end{theorem}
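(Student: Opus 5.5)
This is the standard functional completeness of $\{\lnot,\land,\lor\}$, and I will prove it by constructing the disjunctive normal form directly from the truth table. Let $S_{c(k,\ell)}$ be a mode of composition in the $\ell$ letters $x_1,\ldots,x_\ell$. By the Truth-Table definition it is given by an output value $?\in\{0,1\}$ on each of the $2^{\ell}$ numbered cases, Case $j$, $1\leq j\leq 2^{\ell}$. To each Case $j$ I attach a \emph{phrase} (a unit symbol in the sense of the first Lemma of this section). This phrase is the conjunction $m_j=y_1\land y_2\land\cdots\land y_\ell$, where $y_i=x_i$ if the $i$:th entry of Case $j$ is $1$, and $y_i=\lnot x_i$ if it is $0$. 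The phrase uses only $\land$ and $\lnot$, with $\lnot$ accompanying a statement letter, as the DBNF definition requires.

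The first key step is that $m_j$ is true exactly on Case $j$ and false on every other case. Under Case $j$ each $y_i$ evaluates to $1$, so the conjunction is true. Under any Case $k\neq j$ the two lines differ in some coordinate $i$, so $y_i$ evaluates to $0$ and the conjunction is false. This is exactly the statement that the $2^{\ell}$ unit areas are disjoint and that together they exhaust $(11\cdots 1)$. I will invoke the first Lemma of this section for that, whose inductive proof over $\ell$ already lists these phrases.

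Next I let $J=\{j : S_{c(k,\ell)} \text{ is true on Case } j\}$ and define $D=\bigvee_{j\in J} m_j$. The two statements agree on every case. On a case $k\in J$, the disjunct $m_k$ is true, so $D$ is true. On a case $k\notin J$, every disjunct $m_j$ with $j\in J$ is false by the previous step, so $D$ is false. Hence $D$ and $S_{c(k,\ell)}$ have the same truth table, and $D$ is built from $\lnot,\land,\lor$ only.

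The main obstacle is the degenerate case $J=\emptyset$, the trivial operator $(00\cdots0)$, where the empty disjunction must be given meaning. I handle it by taking $D=x_1\land\lnot x_1$, which is false on every case. Dually, the all-true operator $(11\cdots1)$ is covered by the general construction, which reproduces the displayed identity of the first Lemma. For consistency with the paper, I remark at the end that the second Lemma then transfers $D$ to PBNF, replacing conjunction by multiplication and disjunction by addition. This replacement is legitimate because the disjuncts are disjoint unit areas, so the sum of the $m_j$ with $j\in J$ in $\mathbb{Z}_2$ realizes the same string.
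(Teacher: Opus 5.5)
Your proposal is correct and follows essentially the same route as the paper, which (after Quine and Post) assigns to each Case $j$ of the truth table a conjunction of literals $\chi_j$, takes the disjunction of those $\chi_j$ on which the output is $1$, and assigns a contradiction to the all-zero table. Your version is more careful than the paper's: you check explicitly that each phrase $m_j$ is true on exactly one case, and you give the empty disjunction a concrete meaning via $x_1\land\lnot x_1$.
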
 \begin{proof} This is verbatim from Quine[9] who refers to Emil Post. Alternation is now often named disjunction. Case 1 from the definition above is a case of conjunction, i.e., every $x_j$ is true. We write this $\chi_1=x_1\land x_2 \land \ldots \land x_{\ell}$ is true. From this follows, for Case 2, that $\chi_2=x_1\land x_2' \land \ldots \land x_{\ell}$ is also true. Case 3 is $\chi_3=x_1'\land x_2 \land \ldots \land x_{\ell}$ is true  and Case 4 is $\chi_3=x_1'\land x_2' \land \ldots \land x_{\ell}$ is true. For Case $i>4$ it is the case when $\chi_i= x_1 \land x_2 \land \cdots \land x_i' \land \cdots \land x_{\ell}$ is true. This means every case between $1$ and $2^{\ell}$ is a conjunction of literals. Now every truth table is part of the table above for the appropriate $\ell$ (here we set $2^2=4$ by choosing $\chi$ as we did above). For the question marks, there are two cases: if only one 1 (T) is needed to get 1, then it must be a conjunction. Case 2 means that multiple cases form the question mark, so it must be a disjunction of, say, $\chi_i \lor \chi_j \lor \cdots \lor \chi_k$, which is true if any of the phrases are true. This means we can form all cases using $\lor$, $\land$, and $\lnot$ as desired. The trivial case with just zeros could be assigned the contradictory case. The conclusion of the proof is: \emph{All truth functional modes are translatable into terms of disjunction, conjunction, and negation. We can take these as the sole basic or primitive modes of composition and construct all others merely as results of repeated application of these primitive modes.} \end{proof} What is missing now is the transformation $$\land \leftrightarrow pq,\lor \leftrightarrow (p+1)(q+1)+1, \lnot \leftrightarrow p+1.$$ We could take the statement as obvious, but we shall discuss the details anyway, by connecting to the comment Quine gives for the example I$^2$ where we added the polynomial symbols
\begin{corollary} The connections between DNF and PBNF are the following\begin{align}
i) \chi_1=x_1\land x_2 \leftrightarrow pq\\
ii) \chi_2=x_1\land x_2' \leftrightarrow p(q+1)\\
iii)\chi_3=x_1'\land x_2 \leftrightarrow(p+1)q\\
iv)\chi_4=x_1'\land x_2' \leftrightarrow(p+1)(q+1)
\end{align} \end{corollary}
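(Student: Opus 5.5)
We read the corollary as a statement about two descriptions of the same object. Each Quine phrase $\chi_i$ (from the proof of Theorem 2.9, specialised to $\ell=2$) is a conjunction of literals. Each polynomial on the right is a product of selectors or their complements in $\mathcal{PB}$ over $\mathbb{Z}_2$. The claim is that these two descriptions have the same truth string on the four cases of the truth table. The plan therefore has three parts: fix the transfer rules for the primitives, push each $\chi_i$ through them, and check the resulting number strings against the unit areas of Lemma 2.7.

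\emph{The primitive dictionary.} We work in the normal family $g(p,q,1)$, with selectors $p=p_1(1100)$ and $q=q_1(1010)$.
\begin{itemize}
\item For conjunction, $\land$ is $\min$ on $\{0,1\}$, and $\min(x,y)=xy$ in $\mathbb{Z}_2$. So $\land\mapsto pq$; this is row 2 of the DBNF rules (Definition 2.1) and the $\hat p\land\hat q$ row of the table.
\item For negation, $x'$ swaps $0$ and $1$, which is exactly $x\mapsto x+1$ modulo $2$. So $x_1'\mapsto p+1=p.$ and $x_2'\mapsto q+1=q.$; this is the $\lnot p$ row of the singular table.
\end{itemize}
Because the map $g$ acts coordinatewise on number strings, and the operations in $\mathcal{PB}$ are pointwise $+,\cdot$ mod $2$ (Definition 2.6), composing these rules gives a ring-homomorphic transfer. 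Hence $g(\chi_i)$ is simply the product of the transferred literals.

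\emph{Computing the four cases.} We multiply the strings pointwise:
\begin{itemize}
\item $pq=1100\cdot1010=1000$;
\item $p(q+1)=1100\cdot0101=0100$;
\item $(p+1)q=0011\cdot1010=0010$;
\item $(p+1)(q+1)=0011\cdot0101=0001$.
\end{itemize}
These are exactly the four basis strings of Remark 2.5, that is, the unit areas $a_1,\dots,a_4$. They are also exactly the truth strings of $\chi_1,\dots,\chi_4$ read off the truth table in Definition 2.8 (Cases 1 to 4). This gives i)--iv) as equalities of truth functions. Bijectivity, i.e.\ reading the arrows as ``$\leftrightarrow$'', follows from two facts. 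First, the four strings are distinct and disjoint, so each determines its phrase uniquely via the fiber $\hookrightarrow$. Second, a polynomial of the form $a_1pq+a_2p+a_3q+a$ is determined by its values on the four cases.

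\emph{Main obstacle.} Computationally nothing is hard. The delicate point is the choice of family, since the paper's table lists four polynomial families and the dual family $g(p',q',0)$ reverses the correspondence. The statement implicitly uses $g(p,q,1)$ with selector order $p_1,q_1$ and the dictionary ordering of Definition 2.4. I would state this explicitly. I would also note that under reflection $\parallel$ one obtains the mirrored assignment $\chi_1\leftrightarrow(p+1)(q+1)$, and so on. That mirrored version is the form later needed for the SAT dualisation.
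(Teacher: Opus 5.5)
Your proposal is correct and follows essentially the paper's route: take $x_1\mapsto p=1100$ and $x_2\mapsto q=1010$, realise $\land$ as $\min$ (the pointwise product mod $2$) and $\lnot$ as $+1$, and read off the unit strings $1000,0100,0010,0001$. The differences are minor: the paper gets case ii) indirectly, as the complement $1011+1111=0100$ of the conditional $\chi_1\lor\chi_3\lor\chi_4$, whereas you compute $1100\cdot 0101$ directly; you also add the valid uniqueness remark that $a_1pq+a_2p+a_3q+a$ is determined by its four values.
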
 \begin{proof} We start the proof with an example that follows Quine's comments. For the conditional, which has 1 in all places but the second, we get $\chi_1\lor \chi_3 \lor \chi_4$ and $$(x_1 \land x_2) \lor (x_1'\land x_2) \lor (x_1'\land x_2').$$ If we put this as an equation, and use that it is the complement of area 2, and that the conditional also have $(x_1' \lor x_2) \in$ DBNF  \begin{align}((x_1\land x_2'))'=(x_1 \land x_2) \lor (x_1'\land x_2) \lor (x_1'\land x_2')=( x_1' \lor x_2) \in \text{DNF}\\ \cong p(q+1)+1=pq+(p+1)q+(p+1)(q+1)=(p+1)q+(p+1)+q, \end{align} we can write $x_1=1000 +0100=1100:= p$, and  $x_2=1000+0010= 1010:=q$ and that $\land=\text{min}(1|0)$ and $\lor= \text{max}(1|0)$ by definition of the lexicographical order \begin{align}
i) x_1\land x_2 \cong \text{min}({1100},{1010})=1000  \leftrightarrow pq\\
ii) 0\\
iii)x_1'\land x_2 \cong \text{min}({0011},{1010}) =0010 \leftrightarrow(p+1)q\\
iv)\chi_4=x_1'\land x_2'\cong \text{min} ({0011},{0101})=0001 \leftrightarrow(p+1)(q+1). \end{align} These areas are disjoint, so we can add the number strings $1000+0000+0010+0001=1011$. We can now conclude the following about the unit areas: \begin{align}
i) pq \equiv 1000\\
ii) p(q+1)\equiv (1000+0010+0001)+1111=1010+1111=0100\\
iii)(p+1)q\equiv 0010 \\ iv)(p+1)(q+1)\equiv 0001
\end{align} \end{proof}  Now we prove the extension of the transfer function $g$ in (2.6) to an $\ell \in \mathbb{N}$-divided unit square. The following iteration process is suited to higher arity.  \begin{definition}The Boolean transfer function $g$: Op $\to \mathcal{PB}$ defined in (2.6) is linear for $S_{c(k,\ell)}$ (the statement form, $c$=classical, $k$=counting the operators(connectives), $\ell$=total of literals) the following iterated way. \begin{align} g(S_{c(k,\ell)}  * S_{c(k,\ell)})=g(S_{c(k,\ell)} \ g (*) \ g(S_{c(k,\ell)})) \\ =(s^{j-1}_{g} \ g(*) \ s^{j}_{g}) =(p,q,a)\circ (p,q,a) = [(\hat p+1)(\hat q+1)]\\ \hookrightarrow S_{c(k,\ell)}; \ a \land a_{j=1,2,3}=0|1. \end{align} \end{definition}  \begin{theorem} We can also generalize to a fix but unbounded $\ell$ by instead considering the series of compositions $\circ$ by $g_{\kappa}$ \begin{align} \sum_{i=1 \cdots n} (\sum_{\kappa} g_{\kappa}(S^{j-1}_{c(k,\ell \geq 3)} \  * \  S^{j}_{c(k,\ell \geq 3)}))=\sum_{i=1 \cdots n}  \hat s_g^{j-1}  \hat s_{g}^{j}\end{align} where $\kappa$ gives the number of forms composed by $g_{\kappa} $ at a time. \end{theorem}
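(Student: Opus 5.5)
Induction on the number $n$ of clauses seems the natural route, using the iterated definition of $g$ in Definition 2.10. The step that makes the generalization possible is the linearity built into that definition: $g(S * S) = g(S)\, g(*)\, g(S)$. It lets us treat each already composed block as a single statement form again, so the binary case can simply be reapplied.

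\emph{Base case} ($n=2$). Take two clauses $S^{j-1}_{c(k,\ell\geq 3)}$ and $S^{j}_{c(k,\ell\geq 3)}$. Lemma 2.6 and Lemma 2.7 write each clause in DBNF as a sum of disjoint unit symbols, and $g$ sends each of these to a polynomial in $\mathcal{PB}$. The binary transfer (2.6) then gives $g(S^{j-1}*S^j)=[(\hat s_g^{j-1}+1)(\hat s_g^{j}+1)]$, where the brackets denote the polynomial bundle. Expanding modulo $2$, this representative equals $\hat s_g^{j-1}\hat s_g^{j}+\hat s_g^{j-1}+\hat s_g^{j}+1$.

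Inside the bundle $[g_*]$ of (2.1), the constant term $a$ and the linear terms are fixed by the choice of selectors and of the renaming $1\mapsto 0$. The equivalence classes in (2.3) are closed under the reflections $p\mapsto p.$ and $(\cdot)\mapsto(\cdot)'$, so we may pick the representative whose multiplicative part is exactly $\hat s_g^{j-1}\hat s_g^{j}$. For a single pair, this monomial is the summand $\sum_\kappa g_\kappa(\cdot)$ with $\kappa=1$.

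\emph{Inductive step.} Assume the identity for $n-1$ clauses. Group the $\kappa$ forms composed at one time into a multinomial $\hat s_g$, using the hat convention of Section 2 that a multinomial stands for any collection of monomials. The next clause is then composed with this block by the same binary rule, which appends one more product term to the outer sum over $i$. Adding the new term to the induction hypothesis gives the formula for $n$.

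The main obstacle is justifying that the lower-order pieces $\hat s_g^{j-1}+\hat s_g^{j}+a$ can be dropped or absorbed, so that only the sum of products remains. My first attempt would use two facts:
\begin{itemize}
\item every clause appears twice in the reflected scheme (1.2), so under Definition 2.5 the linear terms occur an even number of times and cancel in $\mathbb{Z}_2$;
\item the constants $a$ cancel pairwise in the same way.
\end{itemize}
I would check the parity count carefully for both odd and even $n$. If it fails, the fallback is to state the identity only up to equivalence in the bundle $[g_*]$, reading the equality as one between classes rather than between literal polynomials.
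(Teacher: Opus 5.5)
Your base case is fine if it is read in the bundle, but the inductive step and the cancellation argument both fail.

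\textbf{The inductive step.} Composing the already composed block $B$ with the next clause by the binary rule (2.6) gives $[(B+1)(\hat s_g^{n}+1)]$. Its representatives are $B\hat s_g^{n}$ or, in $g(p',q',0)$, $B\hat s_g^{n}+B+\hat s_g^{n}$. In both, the whole block is multiplied by $\hat s_g^{n}$; no summand $\hat s_g^{n-1}\hat s_g^{n}$ is appended. Iterating gives $\prod_{j}\hat s_g^{j}$, respectively $\prod_{j=1}^{n}(\hat s_g^{j}+1)+1$. The latter is the sum of the products over \emph{all} nonempty sets of clauses. Pure additivity $\hat p.\hat q.'=\hat p+\hat q$ needs $\hat p\hat q=0$, as under the hypothesis of Corollary 3.14.1, and then your consecutive products $\hat s_g^{j-1}\hat s_g^{j}$ vanish too. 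If you instead read the left side as a flat sum of separately composed pairs, nothing is composed with a block and the induction is mere termwise addition.

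\textbf{The parity argument.} It fails in either reading.
\begin{itemize}
\item Under nesting, the linear parts of inner compositions are arguments of outer compositions, where they get multiplied; they are not summands of one common sum. Repeating a clause does not cancel it either, because conjunctive composition is idempotent: $\hat s.\hat s.'=\hat s^{2}=\hat s$, just as $S\land S=S$. Since $\circ$ is commutative and associative, the two mirrored halves of (1.1)--(1.2) are the same polynomial $L$, so the whole polynomial there is $L\circ L=L$. The doubling cancels nothing.
\item Under a flat reading the argument proves too much. Each product $\hat s_g^{j-1}\hat s_g^{j}$ also occurs once in each half, so it cancels together with the linear terms.
\item For a single pair there is nothing to cancel against: in general $(\hat s_g^{1}+1)(\hat s_g^{2}+1)\neq\hat s_g^{1}\hat s_g^{2}$ as polynomials.
\end{itemize}
Besides, the statement does not involve the mirrored scheme at all.

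\textbf{What the paper does.} The bundle reading you keep as a fallback is the only viable one, and it is the one the paper's proof works in; that proof cancels nothing. The bundle in (2.3) lists $\hat p\hat q$, $\hat p\hat q'$, $\hat p.\hat q.$ and $\hat p.\hat q.'$ together, so $[(\hat p+1)(\hat q+1)]=[\hat p.\hat q.]$ already contains the bare product. You may take $\hat s_g^{j-1}\hat s_g^{j}$ itself, with no lower-order part, and your ``main obstacle'' disappears.

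What the paper supplies instead is the passage to arbitrary arity $\ell$, which is what the theorem is about. By the Quine--Post theorem and the DBNF-to-PBNF lemma, every $\ell$-ary mode on the $2^{\ell}$ unit areas has a PBNF. It is then handled as a binary model-case operation $g(\hat p*\hat q)$ after the renaming $\hat p\mapsto(r\ldots r_{\ell})$, $\hat q\mapsto(s\ldots s_{\ell})$. The rules $\hat p+\hat p=0$ and $p^{\ell}=p$ make the renamed monomials compute like single letters, so (2.6) applies verbatim to each composition. Your grouping of forms into a multinomial is the germ of this, but you use it only to organize an induction on $n$.

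If you take this route, fix one polynomial family throughout. The bundle also contains $\hat p\hat q'=\hat p\hat q+1$, so choosing representatives independently in each summand would shift the sum by constants.
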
 \begin{proof} The \emph{binary case}, which we refer to as our \emph{model}-case, because we can, in fact, use the model case for all $ \ell$-ary cases, $\ell>2 \in \mathbb{N}$. This rests on the fact that any domain of letters and connectives can be described in a truth table using $\land, \lor$ and $\lnot$ (that is, in DBNF), and that DBNF can be put into PBNF.
Any operation in an $\ell$-divided unit square  $g(p_j (*) q_k)$ forms a \emph{binary} iteration process; the letters involved are just a renaming of the original letters $(p,q)$, such that, for example, $\hat p \mapsto (r \ldots r_{\ell})$ and in the same way $\hat q \mapsto (s \ldots s_{\ell} )$. For higher arity, we get $2^{\ell}$ unit areas, but as we commented before, we handle this $2^{\ell}$-arity in model-binary form. The last step in this composition will then contain $\ell-$ variables, but since these monomials $p(p,q,r\ldots)$ cancel the same way modulo 2 ($\hat p+ \hat p=0$) and $p^{\ell}=p$, this works.\end{proof} \section{Applications to Statements of Higher Arity - SAT problems} We start this section by calculating problems with $\ell \geq 2 $, before we let $\ell<C $, fixed but not bounded in the SAT problems. The initial examples are meant to recall the method, introduce new readers to it, and as a help to get accustomed to the new compact notation for polynomial symbols. \begin{example}Let the conjunctive compound statement $$S_{c(3,2)}=  (.p\Rightarrow q \land .q\Rightarrow p) $$ that has two conditional sub-statements  $S_{c(1,2)}^1$ and  $S_{c(1,2)}^2$ be written, following the table of symbols, as  $$ g(S_{c(3,2)})=(pq.).(p.q)'=pq.+p.q+1=p+q+1$$  using $(p.q)(p.q)=0$ and $p.q+q.p=p+q$ in $g(p,q,1).$ To compare we find $$((p.q).(pq.).)'=((p.q)+1)((pq.)+1)'=p+q+1 \in g(p',q',0).$$
The inverse function $g^{-1}:\mathcal{PB} \to$ Op gives: $g^{-1}_{(p,q,1)}(p+q+1)= p\Leftrightarrow q$ and $g^{-1}_{(p',q',0)}(p+q)= p\Leftrightarrow q.$  \end{example}
\begin{example} The next example is presented directly by polynomial Boolean symbols in the family $g(p',q',0)$
$$((a.b.).).c+a.(b.c), $$ and to translate this to the operator symbols, we get for the equivalence $(g(p',q',0)\ni p+q)$ and $(p \Leftrightarrow q  \in\text{Op}{(..)}')$, the first term is a conjunction in a conditional, the last are two consecutive conjunctions, by the symbol table. We get the operator representation  $\hookrightarrow  (.A\land B \Rightarrow C)\Leftrightarrow (A \Rightarrow .B \Rightarrow C) \in$ Op.
The polynomial can be reduced to $a.b.c+a.b.c=0$ by canceling complements.  \begin{lemma} The distribute law $ p(q.r.)' = (pq).(pr).'$  can be proved in $g(p,q,1).$  \end{lemma}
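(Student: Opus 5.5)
I will verify the identity $p(q.r.)'=(pq).(pr).'$ by direct computation in $\mathbb{Z}_2[p,q,r]$ with idempotency $p^2=p$, i.e. in the family $g(p,q,1)$. The first step is to unpack the compact notation using the table of symbols and rule 6 of Definition 2.1. An inner dot means adding $1$, and the outer prime complements the whole form, so $q.r.'=(q+1)(r+1)+1$, which is the symbol for $q\lor r$. Likewise $(pq).(pr).'=(pq+1)(pr+1)+1$, which is the symbol for $(p\land q)\lor(p\land r)$. The left side is then read as $p$ multiplied by the disjunction symbol, in accordance with $\land\mapsto$ multiplication in $g(p,q,1)$.

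The second step is to expand each side.
\begin{align*}
p\big((q+1)(r+1)+1\big)=p(qr+q+r)=pqr+pq+pr,
\end{align*}
and
\begin{align*}
(pq+1)(pr+1)+1=p^2qr+pq+pr=pqr+pq+pr,
\end{align*}
where I use $p^2=p$, in line with the projection rule $\pi(p^{\ell})=p$ in Definition 2.6, together with $1+1=0$. The two expansions coincide, which proves the identity in $g(p,q,1)$.

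As a cross-check, I will compare number strings on the eight unit areas $a_1,\dots,a_8$ of the $3$-divided square, using Lemmas 2.7 and 2.8. The product $p\cdot(q\lor r)$ and the sum of the disjoint areas of $pq$ and $pr$ minus their overlap $pqr$ both give $pq+pr+pqr$, so the truth tables agree.

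The only delicate point is parsing the outer prime correctly. In $p(q.r.)'$ the prime must apply to $q.r.$ and not to the product with $p$. If it were read the other way, the left side would become $(p(q+1)(r+1))+1$, which is not the intended identity. I will fix the reading by consistency with rule 6 of Definition 2.1, where the prime terminates the inner disjunction formula. With that reading the rest is routine algebra.
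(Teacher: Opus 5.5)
Your proof is correct and takes the same route as the paper, which expands $(pq).(pr).'$ to $pqr+pq+pr$ and factors this as $p(qr+q+r)=p(q.r.)'$. Your version is a bit more careful: you state explicitly that the expansion uses $p^2=p$ and $1+1=0$, which is what the step actually needs, whereas the paper says only ``ordinary calculation of real numbers''.
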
 \begin{proof} We find $(pq).(pr).'=pqr+pq+pr=p(qr+q+r)=p(q.r.)'$, by ordinary calculation of real numbers. \end{proof} \begin{lemma} We also prove transitivity under the conditional $$.p\Rightarrow q \land .q \Rightarrow r  \Rightarrow .p \Rightarrow r$$ \end{lemma}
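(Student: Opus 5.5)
We show that the polynomial of $(p\Rightarrow q)\land(q\Rightarrow r)\Rightarrow(p\Rightarrow r)$ in the family $g(p,q,1)$ reduces identically to $1$, i.e.\ the statement is a tautology, working only with the rules of the table and the mod-2 arithmetic of Definition 2.5. From the table, the conditional is $\hat p\Rightarrow \hat q \mapsto \hat p\hat q.'=p(q+1)+1$, and conjunction is multiplication. The antecedent is therefore
\begin{align*}
A=(p q.+1)(q r.+1)=pq.qr.+pq.+qr.+1 .
\end{align*}
Since $q.q=(q+1)q=q+q=0$, the first term vanishes and $A=pq.+qr.+1$.

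Next we apply the conditional once more with $A$ in the role of $\hat p$ and $B=pr.+1$ in the role of $\hat q$. The whole statement becomes $A B.'=A(B+1)+1=A\,pr.+1$. It then suffices to show $A\,pr.=pr.$, i.e.\ $A\cdot pr.+pr.=0$, using the ordering remark after the lattice ($x\leq y$ iff $xy=x$). Expanding term by term with $p^2=p$, $r.^2=r.$, $q+q.=1$ gives
\begin{align*}
pq.\cdot pr.=pq.r., \qquad qr.\cdot pr.=pqr., \qquad 1\cdot pr.=pr. .
\end{align*}
Hence
\begin{align*}
A\,pr.=pq.r.+pqr.+pr.=pr.(q.+q)+pr.=pr.+pr.=0 .
\end{align*}
The final equality holds because an even number of equal terms annihilates. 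The whole expression is therefore $0+1=1$, the trivial operator $(1111)$, and the statement is true in all cases.

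The main hazard is keeping the dot notation straight, since $.\,$ denotes an inner complement and $'$ an outer one. I would double-check the result against the dual family $g(p',q',0)$, where the same computation should yield the PBE $=0$, mirroring the comparison made in Example 3.1.
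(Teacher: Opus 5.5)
Your computation is correct, and it is essentially the paper's argument carried out in the other family. The paper first rewrites the statement in operator space as $\lnot(.\lnot p\lor q\land .\lnot q\lor r)\lor(\lnot p\lor r)$. It then works in the dual family $g(p',q',0)$. There the disjunctive ``clauses'' become products $p.q$, $q.r$, $p.r$. The conjunctive composition $(p.q).(q.r).'$ collapses to the sum $p.q+q.r$ because $q$ occurs with reversed parity. The statement becomes $p.qr+p.q.r+p.r=p.r(q+q.+1)=0$, which is identically true since truth is $0$ in that family.

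You instead stay in $g(p,q,1)$ and use the table symbol $\hat p\hat q.'$ for $\Rightarrow$ directly. You reach $pq.r.+pqr.+pr.=0$, so the statement evaluates to $1$, the trivial operator $(1111)$. The two reductions are reflections of each other: complement every letter and rename $1\leftrightarrow 0$. Both hinge on the same two facts, $qq.=0$ and $q+q.=1$.

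Your route is more direct: no rewriting in operator space is needed, and the conclusion is the familiar ``tautology equals $1$''. The paper's detour is deliberate. It makes the statement look like a SAT instance and shows the conjunctive composition collapsing to addition under reversed repetition. The paper later exploits exactly that collapse (Corollary 3.14.1 and the additive SAT reductions).

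One sentence in your write-up is wrong as stated: ``it suffices to show $A\,pr.=pr.$, i.e.\ $A\cdot pr.+pr.=0$.'' If that held, the statement would evaluate to $pr.+1$, not $1$. What you need, and what your displayed computation actually proves, is $A\cdot pr.=0$. Writing $A=(pq.+qr.)+1$, this is equivalent to $(pq.+qr.)\,pr.=pr.$, i.e.\ $pr.\le pq.+qr.$ in the lattice order. That is presumably what you meant by invoking $x\le y$ iff $xy=x$.
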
\begin{proof} Here we give the operator view because we change it to $$\lnot(.\lnot p\lor q \land .\lnot q \lor r)  \lor (\lnot p \lor r)$$
by using $p\Rightarrow q= \lnot p\lor q$ four times. This is because it now looks more like a SAT problem. We map this into the compact form in $g(p',q',0)$ $$((p.q).(q.r).)...(p.r)=((p.q)+(q.r)).(p.r)=p.qr+p.q.r+p.r=p.r(q+q.+1)=0$$
 For the three dots standing together, one comes from $(p.q.).$, one from the outer negation, and one from the formula $\hat p. \hat q.$ We also note that the composition $(p.q).(q.r).'=(p.q)+(q.r)$ becomes addition, which we shall use soon.  \end{proof} \end{example} Now to the SAT problems, that is, a Boolean satisfiable problem, with clauses $S^j_{c(\lor,\ell\geq 3)}$ binded by conjunctions of the type \begin{align} S^1_{c(\lor,\ell \geq 3)} \land S^2_{c(\lor,\ell\geq 3)} \land \ldots \land S^{j-1}_{c(\lor,\ell\geq 3)} \land S^j_{c(\lor,\ell\geq 3)}\land\ldots \land S^{n-1}_{c(\lor,\ell\geq 3)} \land S^n_{c(\lor,\ell\geq 3}\\  \land  \ S^{n}_{c(\lor,\ell\geq 3)} \land S^{n-1}_{c(\lor,\ell\geq 3)}\land \ldots \land S^j_{c(\lor,\ell\geq 3)} \land S^{j-1}_{c(\lor,\ell\geq 3} \land \ldots S^2_{c(\lor,\ell\geq 3)}\land S^1_{c(\lor,\ell\geq 3)}. \end{align} We are here supposed to find conditions that make $\hat S^{j}_{c(\land(\lor),\ell \geq 3)}$(short for (3.1) and (3.2) by the hat and the exponent) true, meaning some permutation $1|0$ of the truth values, for one or for many, of the ingoing literals $(p,q,r,s\cdots s_\ell)$ that satisfy the statement; then we say that we have a YES-instance to the SAT. If that is not possible, we say we have an UNSAT. If we transform (3.1) and (3.2) with PBNF into a Polynomial Boolean Equation PBE= $\sum_{j} s^{j}_g=0$, in a complement family, the question is to explore \begin{equation} \text{ker} (\sum_{j} s^{j}_g=0)  \equiv (\sum_{j} s^{j}_g=0))^{-1}.\end{equation}
The kernel is also called the nullspace, and we can write $$\sum_{j} s^{j}_g=(p-a)(q-a)(r-a)(s-a)\cdots (s_{n}-a)=0 ; \ a=1|0$$ for some combination of parentheses, and this expression is zero if we just find one value that is correct for the polynomial, which is equivalent to a YES-instance to the SAT.    
For this transformation, we start by choosing the family $g(p',q',0)$ to help us, as here the composition is just multiplication for the multinomials $\hat p \hat q$, the simplest we can find for the clauses. Other choices would require composing the literals in the clauses, which would be impractical, since there are far more literals than conjunctions. This choice means that we get the more complicated composition $\hat p.\hat q.'$ for the conjunctions, but we shall see that there often will be simplifications so that $\hat p.\hat q.'=\hat p + \hat q.$ \begin{remark} We could of course get the factorization as final composition if we instead use a normal space $g(p,q,1)$, as the composition $\hat p \hat q $ for the conjunctions is already in factor form. In [10], the authors discuss improving the connection between CNF and algebraic polynomial form by using a \emph{power term polynomial algebra} as an abstraction language. This is motivated by a \emph{tiling mismatch} that arises when the CNF is broken into pieces to facilitate conversion, and by the polynomial pieces that result. The method suggested for avoiding this mismatch builds on a powerset construction $\mathcal{P}(p)\setminus\emptyset$ of the polynomial; for example, $p(x_1,x_2)=x_1+x_2 + x_1x_2$  is $$\mathcal{P}(p)\setminus\emptyset= \{\{x_1 \}, \{x_2 \}, \{x_1x_2 \}\}$$ or just $\mathbf{P}_{1,2}$. We add another letter $S$ to the expression to indicate repetition of the letter in all clauses; none here, so $S_{\emptyset} \mathbf{P}_{1,2}$ for $x_1+x_2 + x_1x_2$ is called the power term. The system has some assemblies in structure with PBNF, but with much heavier overhead. In the next example the system needs eight lines to convert, choosing the composition carefully to use reversed repetition of literals $\chi_1 \circ \chi_3, \chi_4 \circ \chi_6$ and $\chi_5 \circ \chi_7$ leaving $\chi_2$ \begin{align}\phi :=(x_1 \lor  x_2) \land (x_2 \lor x_3) \land  (\lnot x_1 \lor \lnot x_4 \lor x_5) \land  (\lnot x_1 \lor x_4 \lor x_6) \land \\ \nonumber (\lnot x_1 \lor \lnot x_5 \lor x_6) \land (\lnot x_1 \lor x_4 \lor \lnot x_6) \land  (\lnot x_1 \lor \lnot x_5 \lor \lnot x_6)\end{align} ending in $S_2 \mathbf{P}_{\emptyset}+S_{1,2}\mathbf{P}_{\emptyset}  \mapsto x_1x_2+x_2$, and the solution $x_1.x_2=1$ or $0,1,x_3,x_4,x_5,x_6.$  This is a YES-instance, an obvious choice as $\lnot x_1$ is a member of sex clauses, and the other two contain $x_2.$
Here we argue that the "mismatch" is more about how the problem is addressed; it seems easier to dualize first and then compose the multiplied clauses (phrases). We get after polynomial transformation, with the direct approach first, where $\hat r=r_1 \circ r_2 \cdots \circ r_n$ and $\hat s=s_1 \circ s_2 \cdots \circ s_n,$ with the composition $\circ$ first as $ rs+r+s$ and then as $\hat p \hat q$ \begin{align} \hat S^{j}_{c(\land(\lor),\ell \geq 3)} \mapsto \hat p \hat q(\hat r\hat s+\hat r +\hat s) = p(\hat r\hat s+\hat r+\hat s)q(\hat r\hat s+\hat r + \hat s)=1\end{align} We got the factorization but the cost was the number of terms equalizing 1 are many more than in the following where we used the dualization first and now $\hat r=r_1 r_2 \cdots r_n$ and $\hat s=s_1 s_2 \cdots s_n,$ with the first composition $\circ$ as $ rs$ and then $p.q.'$ or $pq+p+q$\begin{align} \text{Multiplicativ way}\ \hat S^{j}_{c(\land(\lor),\ell \geq 3)} \mapsto \hat p. \hat q.'(\hat r\hat s)= (p(\hat r). q(\hat s).)'=0\\ \text{Multiplicative additiv}\ (\hat p \hat q+\hat p+ \hat q)(\hat r \hat s)  =p(\hat r)q(\hat s)+p(\hat r)+q(\hat s)=0 \\ \text{Pure additiv}\ \sim p(\hat r)+q(\hat s)=0 \ \text{by Corollary 3.14.1.}\end{align}  We also obtained the equation in normal form, and as we will show, we can often simplify and factorize it to find a solution.  Notably, (3.6) is more economical: we substitute the phrases $(\hat r, \hat s)$ in two places, whereas (3.7) has them in four places, and (3.8) returns to two places.  We also note that we \emph{can} make different conversion choices by using one of many polynomial families; this flexibility is an important feature of the PBNF-transform. \end{remark} \begin{example}The following is the representation of a SAT-problem written with $g(p',q',0)$, where the clauses become phrases with multiplied literals and $\circ$ is the conjunctive composition. \begin{align} (x_1  x_2 x_3 \circ  x_1. x_2  x_3) \circ (x_1  x_2. x_3 \circ x_1  x_2   x_3.)\end{align} We shall use this as an example when we introduce the steps for conversion. The form above is not fully completed as a polynomial equation; the first step is done, but the composition for the conjunctions remains before we can equate the form to zero to complete the transfer to a Polynomial Boolean Equation (PBE=0). \end{example} The next step is to choose a model for the compositions for the conjunctions; we can use them differently; $ pq+ p+ q$ describes the composition as the sum of areas 1, 2, and 3, and this can be written as $pq+p(q+1)+(p+1) q=pq+pq.+p.q$ or $ pq.+q$ or $p.q+p$. Another way to look at this is to form the complement, $ p. q.',$ of area 4; in this way, we have not multiplied $pq$, but instead $p.q.$ hoping for annihilations $p.p=0$, to reduce the equation, and we also just substitute in two terms. \begin{equation}\left\{ \begin{array}{ll}
\text{Conjunctive inner additive composition}\\
\text{without reversed repetition in} \ g(p',q',0)\\
1. (\sum_i s_g^{i})(\sum_j s^{j}_g)+\sum_{i} s_g^{i}+\sum_j s^{j}_g=0 \\
2. (\sum_i s_g^{i})(\sum_j s_g^{j})+(\sum_i s_g^{i})(\sum_j s_g^{j}.)+(\sum_i s^{i}_g.)(\sum_j s_g^{j})=0 \\
3. (\sum_i s_g^{i})(\sum_j s_g^{j}.)+\sum_{j} s^{j}=0\\
4. (\sum_i s^{i}_g.)(\sum_j s_g^{j})+\sum_{i} s_g^{i}=0  \\
 \text{Conjunctive outer multiplicative composition in} \ g(p',q',0)  \\
5. (\sum_i s^{i}_g.)(\sum_j s_g^{j}.)'=0 \ \equiv \ \sum_i s^{i}_g.\sum_j s_g^{j}.=1
\end{array}\right. \end{equation}
The formulas above are the same when the multiplicative term is zero,
which happens in (3.8) as we have reversed repetition of literals, as in $x_1x_1.=0$ and $x_2x_2.=0$ and so on.
For our example, we take number 1 because we have symmetric reversed repetition of literals, and now we get a Polynomial Boolean equation of degree 3
\begin{align} x_1  x_2 x_3 +  x_1. x_2  x_3 + x_1  x_2. x_3 + x_1  x_2   x_3.=0 \end{align} but in fact it is only of degree 2, due to cancellations of higher order terms, $x_i. x_j  x_k=(x_i+1) x_j  x_k=x_i x_jx_k+x_jx_k$ four times for $i,j,k=1,2,3,$ so we got (PBE$^3)\cong$ (PBE$^2$)\begin{align}  x_2  x_3 + x_1 x_3 + x_1  x_2 =0. \end{align}  There are three cases to consider. If we have real repetition, we can factorize it so that it can be written $$p_{j}(\sum_{i} s_g^{i}+\sum_j s^{j}_g)=0.$$
This means that we decided the first solutions string in ker$(\sum_{i} s_g^{i}+\sum_j s^{j}_g)$ which is $$ \text{ker}(\hat s_g^{\ell})=\{ (\hat p_1, \hat p_2\cdots p_{j-1}, 0, \hat p_{j+1} \cdots  \hat p_{\ell-1}, \hat p_{\ell})\cdots(\hat p_1, \hat p_2\cdots p_{j} \cdots  \hat p_{\ell-1}, \hat p_{\ell})  \}$$  where $p_j=0$ and the other terms are arbitrary, the other strings are when $p_j\not= 0$. \begin{lemma}If $\hat s_g^{\ell}$ is repetitive in the clauses, then it has at least the solution string containing $p_j=0$, and other literals arbitrary, since $p_j\hat s_g^{\ell}=0$ by \emph{outside} factorization.\end{lemma}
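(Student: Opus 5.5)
The plan is to read the statement in the form it is actually used just before it. After the clauses have been transferred by $g(p',q',0)$ and the conjunctions composed, the equation has the shape $p_j\,\hat s_g^{\ell}=0$. Here $\hat s_g^{\ell}$ stands for $\sum_i s_g^i+\sum_j s_g^j$, or more generally for the reduced polynomial. The literal $p_j$ occurs in every transferred phrase, which is what I take \emph{repetitive in the clauses} to mean. The claim is then a statement about the kernel: every assignment with $p_j=0$ lies in $\ker(p_j\hat s_g^{\ell})$, whatever values the remaining letters take.

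First I establish the factorization. Each phrase $s_g^i$ is, by the transfer rule $g(\hat S_{c(\lor,\ell\geq 3)})=pqrs\cdots s_\ell$, a product of literals. If $p_j$ (with the same polarity) appears in every phrase, then $s_g^i=p_j t^i$, where $t^i$ is a product of the other literals, using $p_j^{\ell}=p_j$ from the projection rule in the definition of the PBNF-transform.

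Next I push the factor through the composition. For the additive composition (formula 1 of the list of conjunctive compositions) I write
\[
(\textstyle\sum_i s_g^i)(\sum_j s_g^j)+\sum_i s_g^i+\sum_j s_g^j = p_j\bigl(p_j(\sum t^i)(\sum t^j)+\sum t^i+\sum t^j\bigr),
\]
and idempotency $p_jp_j=p_j$ then lets me pull $p_j$ out completely. I iterate this through the binary bracketing of (1.2), using the same computation as in the proof of the distributive Lemma ($(pq).(pr).'=p(q.r.)'$). Each step of the composition $\hat p.\hat q.'$ preserves the common factor $p_j$, so by induction on the number of compositions the whole PBE has the form $p_j\cdot R=0$.

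Finally I evaluate. The family $g(p',q',0)$ has the renaming $1\mapsto 0$ for true, so a point satisfies the statement exactly when the polynomial vanishes there. Over $\mathbb{Z}_2$, setting $p_j=0$ gives $p_j R=0\cdot R=0$ for every choice of the other letters. This yields the solution string $(\hat p_1,\dots,\hat p_{j-1},0,\hat p_{j+1},\dots,\hat p_\ell)$ with the other entries arbitrary, as claimed. To be safe I would also check the claim logically. Under the literal-preserving convention ($\lnot$ is self-dual), $p_j=0$ in the complement reading means the literal is true, so it satisfies every clause containing it, and the conjunction is therefore true.

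The main obstacle is making the induction through the composition honest. The outer multiplicative form $(\hat p.\hat q.)'$ introduces the constant $1$ terms, and only after expansion do these cancel to leave the factor $p_j$. I would handle this by always rewriting to the additive form $\hat p\hat q+\hat p+\hat q$ before factoring, since that form is equivalent by (3.10), and then applying $p_j^2=p_j$.
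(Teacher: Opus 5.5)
Your proposal is correct and follows the paper's reasoning: the paper justifies the lemma only by the outside factorization $p_j\bigl(\sum_i s_g^i+\sum_j s_g^j\bigr)=0$ of the additive form, and then reads off $p_j=0$ with the remaining letters free, exactly as you do. Your extra induction, showing that the common factor survives each composition $\hat p\hat q+\hat p+\hat q$ (equivalently, $0\circ 0=(0+1)(0+1)+1=0$), usefully extends this from the reduced additive sum to the full unreduced formula (1.2), a step the paper leaves implicit.
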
 This is the case often with simple SAT as we can get this type for example with$pq.r+pqr.=p(q.r+qr.)=p(q+r)=0$ giving $(0qr),(1,q,q).$ In more complex problems, we can use direct factorization by equating parts of the equation in the final stage.
For our example, we find no solution this way, but instead, if we start with (3.11) \begin{align} x_2 x_3(x_1+x_1.)+x_1(x_2. x_3+x_2 x_3.) \\
= x_1x_2+x_1x_3+ x_2 x_3  =x_1.x_2.+x_1.x_3.+x_2. x_3. +1=0  \land (000, 001, 010, 100)\end{align} We get the same result of lowering the degree by factorization as in (3.12) by eliminating the negated terms. The solution was obvious after conjugating the terms.
We can also \emph{conjugate away} lower-order terms by multiplying with a negated reversed letter, if we have repetition in all phrases
$$r.(pqr+ p.q.r+pq.r+ p.q.r.)=0; \ p.q.r.=0$$ $r \not=1  \land 110.$
Another way to find a solution is arranged factorization, which is possible due to annihilations by adding terms with reversed repetitions, resulting in a one-step reduction in degree. Also, adding higher terms to lower terms contracts to saturated phrases, where all literals are reversed. \begin{example} All literals are here different or alike but in different parity as in $$x_1  x_2 x_3 + x_1.  x_2.x_3.+  x_4. x_5   + x_4 x_5.=0 $$ so there is no repetition but we can then use additive annihilations when we add terms with reversed repetition as in $$x_1 x_2 x_3+ x_1.x_2.x_3.=(x_1x_2+x_1+x_2)+x_1x_3+x_2x_3+x_3+1=x_1.x_2.+(x_1+x_2.)x_3+1$$ where we got a saturated term $x_1.x_2.$ from the first parenthesis by factorizations.
For the last two terms $x_4. x_5 + x_4 x_5 = x_4 + x_5$ we get $$x_1.x_2.+(x_1+x_2.)x_3+x_4+x_5=1$$ to arrange a factorization that was not visible from the start, and find a solution 11001. \end{example} \begin{example} The equation below is a polynomial representation in $g(p',q',0)$ of a SAT-problem $$x_1x_2.x_1+x_1.x_1.x_2.+x_2x_2x_2=0$$ and it has no solution. If the last clause is negated, the equation is an identity.\begin{proof}  As we have repetition inside the clauses we reduce to$$x_1x_2.+x_1.x_2.+x_2=x_2.(x_1+x_1.)+x_2=x_2.+x_2=0 \equiv (1=0),$$ so this is a contradiction, which means that every combination of $x_1,x_2$ and $x_3$ is false. Then the kernel is $\emptyset$; if we had $\lnot x_2$ in the last clause instead, we get the identity $0=0$, and the kernel is the domain. \end{proof}\end{example} We can also use the following fact \begin{lemma} For the family $g(p',q',0)$, addition of terms means an equivalence $\hat p + \hat q \hookrightarrow \hat p \equiv \hat q,$ in the operator space. This means that every clause or term must have the same value, $1$ or $0$, so that $\hat p+ \hat q=1+1$ or $\hat p+ \hat q=0+0$.\end{lemma}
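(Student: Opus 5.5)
The plan is to prove the lemma by a truth-value (case) check on the polynomial $\hat p+\hat q$ over $\mathbb{Z}_2$, and then to read off the operator through the symbol table for the family $g(p',q',0)$. First I fix what $\hat p$ and $\hat q$ are: arbitrary multinomials, and in particular transferred clauses (phrases). Each of them takes a value in $\{0,1\}$ once the literals are assigned. By the definition of the PBNF-transform, addition is performed modulo 2, so $\hat p+\hat q$ has exactly four value cases, $(1,1),(1,0),(0,1),(0,0)$. The sum is $0$ in cases $(1,1)$ and $(0,0)$ and $1$ in the other two. This gives the number string $0110$, the selector-type string $p+q$ in the lattice for $\text{PB}_{(..)}'$.

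Next I invoke the table of polynomial symbols at the row $\hat p\Leftrightarrow\hat q$. In the column $\text{PB}_{(..)}'\, g(\hat p',\hat q',0)$ the entry is $\hat p+\hat q$. This is consistent with the earlier Example, where $g^{-1}_{(p',q',0)}(p+q)=p\Leftrightarrow q$. The key point is the renaming of truth values $1\mapsto 0$ (T) in this family: a value $0$ of the polynomial means ``true''. So $\hat p+\hat q=0$, i.e. the equivalence holds, exactly in the two cases where $\hat p$ and $\hat q$ take the same value. Under the fiber $\hookrightarrow$ this gives $\hat p+\hat q\hookrightarrow \hat p\equiv\hat q$. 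For the second sentence of the lemma, I impose the PBE condition $\hat p+\hat q=0$. The case analysis above then leaves only $1+1$ or $0+0$, which is the claim that both terms must carry the same value.

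Finally I extend the argument from two terms to a sum $\sum_j s^j_g$ of several phrases, which is the form of the equation the lemma is meant for. I do this by induction on the number of terms, grouping pairwise as in the binary iterated composition of the definition of $g$ and the theorem on series of compositions. I expect this step to be the main obstacle. Pairwise equality does not literally extend to ``every term has the same value'': for more than two summands, a zero sum only forces an even number of terms equal to $1$. I would therefore state the multi-term reading in the binary model case, with the sum split into two multinomials $\hat p=\sum_i s^i_g$ and $\hat q=\sum_j s^j_g$, and apply the two-term argument to these. Any stronger per-clause reading would have to be restricted to the pairwise compositions produced by the reversed-repetition reduction in formula~1 of the composition list.
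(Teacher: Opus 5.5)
Your argument is correct and is essentially the paper's own justification. The paper gives no separate proof; it states the lemma as a fact read off the symbol table (row $\hat p\Leftrightarrow\hat q$, column $\text{PB}_{(..)}'$, entry $\hat p+\hat q$) and the first example of Section~3 ($g^{-1}_{(p',q',0)}(p+q)=p\Leftrightarrow q$), and your $0110$ case check under the renaming $1\mapsto 0$ makes that lookup explicit. Your caveat that a vanishing sum of more than two terms only forces an even number of ones is also right; in the example that follows, the paper applies the lemma only pairwise, as a sufficient condition ($p.r.q.s.=pqr$, $p.qr.=pq.s$).
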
In the next example, we consider the satisfiability in $S_{c(11,4)}$ \begin{example} $$(p\lor q\lor r )\land (\lnot p \lor q \lor \lnot r)\dot \land(p \lor \lnot q \lor s)\land (\lnot p \lor \lnot r\lor \lnot s)$$ is a YES instance to 3-SAT [5], and we can use a dot over a symbol to indicate principality.\begin{proof} Here we take some time to analyze in the table below \begin{scriptsize} \begin{center} \begin{tabular}
{|c | c| c | c | c | c | c | c | c | c | c | c | c | c | c || c | c |}
\hline
 $ clause \ 1 $ & $*$   &$ clause \ 2 $ & $ *  $
& $ clause \ 3 $ & $*$  & $ clause \ 4 $ & $*$& &$*$& 1|0 \\
\hline
 $ p\lor q\lor r $ & $\land$  & $ \lnot p \lor q \lor \lnot r $ &$\land $  & $p \lor \lnot q \lor s$& $\land$ & $\lnot p \lor \lnot r\lor \lnot s $ &  & &&\\
\hline
$pqr$ & $\circ$  & $p.qr.$ &$\circ $  &$pq.s$
&$\circ$ & $p.r.s. $&  & \emph{added phrase} & &\\
\hline
$pqr$&$+$ &  $p.qr.$  & $\circ$ & $pq.s$ & $+$ &$p.r.s.$ &  & & &\\
\hline
 $pqr$&+&$p.qr.$& +&$pq.s$&+ &$p.r.s.$ &+ &$p.r.s.q$ &  = & 0\\
\hline
$pqr$&+&$p.qr.$& +&$pq.s$&+ &$-$ &+ &$p.r.q.s.$ & =& 0 \\
\hline
\hline
\end{tabular} \end{center}\end{scriptsize}
$$\begin{smallmatrix} \boldsymbol{x} & (1 & 2 & 3) & (4& 5 & 6)\\
letters  & (p & q & r) & (\lnot p & q & \lnot r)\\ \end{smallmatrix}
\begin{smallmatrix} \boldsymbol{y} & (6 & 5 & 4) & (3& 2 & 1)\\
letters  &( p & \lnot q & s) & (\lnot p & \lnot r & \lnot s)\\
\end{smallmatrix}$$ We got higher-order terms in the final composition, but reductions may occur through factorizations, such as $p.r.s.(q+1)=p.r.s.q.$, giving the higher-order term the same parity, so it is saturated. This means that if it is equal to one, then $p=q=r=s=0$, but that gives a contradiction of $1=0$, because the other terms becomes zero as they are not saturated, thus $p.r.s.q.=0.$  Then Lemma 3.10 says that the other terms must have the same parity pairwise, which gives
\begin{equation}\left\{ \begin{array}{ll}
1. \ p.r.q.s.=pqr \ 1100\\
2. \ p.qr.=pq.s  \ 1100\\
\end{array}\right. \end{equation} Lots of other possibilities exist for satisfiability for this problem; in fact, it is easier to collect those that do not work, and these are the two trivial solutions 0000 and 1111, which fail in 1, together with 1110.  In 2, 1011, 0100, 0101, and 0100 fail. $$\text{ker}(\hat s_g^{\ell(01)})=\{ \hat s_g^{\ell(01)}=0 | 0001, 0010, 0011, 0110, 0111, 1000, 1010, 1100, 1101 \}.$$  If we conjugate with $q.\not=0$ we  get $p.r.q.s. +pq.s=0$ then we must have $011s$ so we cannot decide all of the kernel this way, just those members of the type above $(0110, 0111)$ but that is enough for a YES instance to SAT. \end{proof}

 Now we make a change in the second clause, so that the higher-order terms vanish \begin{scriptsize} \begin{center} \begin{tabular}
{|c | c| c | c | c | c | c | c | c | c | c | c | c | c | c || c | c |}
\hline
 $ clause \ 1 $ & $*$   &$ clause \ 2 $ & $ *  $
& $ clause \ 3 $ & $*$  & $ clause \ 4 $ & $*$& 1|0 &   \\
\hline
 $ p \lor  q \lor r$ & $\land$  & $ p \lor q \lor \lnot r $ &$\land $  & $p \lor \lnot q \lor s$& $\land$ & $\lnot p \lor \lnot r\lor \lnot s $ & & & \\
\hline
$pqr$ & $\circ$  & $pqr.$ &$\circ $  &$pq.s$
&$\circ$ & $p.r.s. $& & & added phrases   \\
\hline
$pqr$&$+$ &  $pqr.$  & $\circ$ & $pq.s$ & $+$ &$p.r.s.$ & & & \\
\hline
 $pqr$&+&$pqr.$& +&$pq.s$&+ &$p.r.s.$ &=&0 &     \\
\hline
&-&$pq$& +&$pq.s$&+ &$p.r.s.$ &=  & 0&   \\
\hline
\hline
\end{tabular} \end{center}\end{scriptsize}
As the table shows, we get a late reduction: $pqr+pqr.=pq(r+r.)= pq$, to remember. We can look at the saturated term $p.r.s.=1$  gives a contradiction again, so we cannot have $p=r=s=0$ \begin{equation}\left\{ \begin{array}{ll}
1. \ p.r.s.=pq \ \land 1000\\
2. \ pq.s=0  \ \land 1000\\
\end{array}\right. \end{equation} \begin{align} \text{ker}(\hat s_g^{\ell})= \{(x_1x_2x_3x_4) | \ \hat s_g^{\ell}=0:(0001, 0011, 0101, 0110, 0111, 1000, 1010\} \end{align}
This is suitable for manual computation; otherwise one can consider the polynomial $\hat p. \hat q.'$, which gives \begin{align} (pqr \ \circ \ pqr.) \ \dot\circ \ (pq.s \ \circ \ p.r. s.)=((pqr). (pqr.).)'\ \dot\circ \ ((pq.s).  (p. r. s.).)'\\ ((((pqr). (pqr.).).). (((pq.s).  (p. r. s.).).).)'=0 \end{align} \end{example} Here we do not make the reductions done in the table above, but they are clearly visible when we multiply $(pqr) (pqr.)=0$ and  $(pq.s)  (p. r. s.)=0.$ \begin{remark}The same idea as in Remark 3.5 of improving the direct correspondence between CNF and a polynomial representation is also discussed in [6]. The authors first demonstrate that the standard conversion yields many high-degree polynomials. They propose in the following a "bookkeeping" approach to exploit repetition (overlapping) in clauses to reduce these consequences of the polynomial conversion, by building blocks of clauses, trying to reduce to a special case, for example, where the literals in the clauses are the same but the repetition is odd, that is, either one or three are negated. The claim is: $$(\bar X_1 \lor X_2 \lor X_3) \land  (X_1 \lor \bar X_2 \lor X_3) \land (X_1 \lor  X_2 \lor \bar X_3)(\bar X_1 \lor \bar X_2 \lor \bar X_3)$$ $$ \mapsto x_1+x_2+x_3$$
\begin{proof} We transfer this to $$
x_1.x_2x_3 + x_1x_2.x_3 + x_1x_2x_3. + x_1.x_2.x_3. =0$$ by Corollary 3.14.1 in $g(p',q',0)$ and we lower the degree by Lemma 3.17
$$x_1x_2+x_1x_3+x_2x_3+ x_1x_2+x_1x_3+x_2x_3+ x_1+x_2+x_3=x_1+x_2+x_3$$ by also expanding $x_1.x_2.x_3.$ \end{proof} This is like Example 3.18 in [2], where we studied $+pqr$, the \emph{ternary parity} operator; it returns 1 when an odd number of inputs are 1. For a binary case it is the operator XOR:= |, with inputs $^{10}_{01}$, and symbol $p+q$. The adequate inputs in the ternary case are 100,010,001,111 and we get $p.qr+pq.r+pqr.+p.q.r.=p+q+r$, the same expression as above. To this the authors present as a proposition $$\phi=  (\phi \lor \psi)\land (\phi \lor \lnot\psi) \mapsto_{g(p',q',0)} (\phi \psi).(\phi \psi.).'=\phi\psi+\phi\psi. =\phi  $$ by (3.44) Lemma 3.17. This is used  for $\phi =\bar X_1 \lor X_2$ and $\psi=X_3$ to create missing terms $\bar X_1\lor X_2\lor X_3$, to get a complete ternary operator equation as in the example from [6]. \end{remark}\begin{theorem}The formula for the \emph{3-SAT} problem with four clauses in the variables $x_i$ and $y_j$, negated $(x_i. y_j.)$ or not $x_i y_j$, same $x_i=x_j$ or different written in $\mathcal{PB}\ni g(p',q',0)$ are for any case in this set up \begin{align}((((x_1 x_2 x_3).  (x_4 x_5 x_6).).).(((y_{6} y_{5} y_{4}).(y_{3} y_{2} y_{1}). ).).)'=0\end{align} when $(p\lor q \mapsto pq) \land (p\land q \mapsto p.q.'),$ by three conjunctive compositions, left-left, right-right and left-right, following the disjunctive multiplications. For the general case, with three literals in each clause (3-SAT), in $g(p',q',0)$ we get
\begin{align}((\cdots (((x_1 x_2 x_3). (x_4 x_5x_6).).).\cdots ((((x_i x_j x_k). (x_{\ell} x_m x_n).).)\cdots ).).\\ \nonumber ((\cdots ((y_{n} y_{m} y_{\ell}).  (y_{k} y_{j} y_{i}).).\cdots (((y_{6}y_{5} y_{4}).  (y_{3} y_{2} y_{1}).).).). \cdots ).)'=0 \end{align} following the pattern established in the 4-clauses case, and each composition gives  parentheses of the form:  $ "\ (( \  $  $  \ ).)' \ " $.  The left side is without dots and $'$, and they come on the right side, ending the formula. If $ ").)'"$ is put inside, we just use dots. \end{theorem}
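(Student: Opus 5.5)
The plan is to read the statement as a claim about how the conversion $g(p',q',0)$ from the table in Section 2 behaves under composition, and to prove it by induction on the number of conjunctive compositions. Everything rests on two translation rules, both read off the table. A clause $x_1\lor x_2\lor x_3$ maps to the phrase $x_1x_2x_3$. For this, apply the row $\hat p\lor\hat q\mapsto \hat p\hat q$ of PB$_{(..)}'$ twice, so that $(x_1\lor x_2)\lor x_3\mapsto (x_1x_2)x_3$. Negated literals stay negated, because $\lnot$ is a fixed point by (2.16). A conjunction $\hat P\land\hat Q$ maps to $\hat P.\hat Q.'=(\hat P+1)(\hat Q+1)+1$, from the row $\hat p\land\hat q\mapsto\hat p.\hat q.'$. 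Finally, the truth value is renamed $1\mapsto 0$. So the statement is satisfied exactly when the image polynomial equals $0$, and this gives the right-hand side "$=0$".

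First I treat the four-clause case. I would write the CNF with the bracketing $(C_1\land C_2)\land(C_3\land C_4)$, where $C_1,C_2$ carry the $x$-letters and $C_3,C_4$ the reflected $y$-letters in the order $(y_6y_5y_4),(y_3y_2y_1)$ used in the statement. By the clause rule each $C_i$ becomes a phrase. The left-left composition then gives $((x_1x_2x_3).(x_4x_5x_6).)'$, and the right-right composition gives the analogous $y$-expression. For the left-right composition, apply the conjunction rule to these two bundles. That means complementing each bundle once more, so the trailing $'$ becomes $'.$, which is the dot $).$ in the statement. We then multiply the two and add the outer $'$. Writing this out gives (3.17) verbatim. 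I would also note that the result does not depend on whether letters coincide ($x_i=x_j$) or on whether they are negated. The translation is purely syntactic, and since $p^\ell=p$ and $p+p=0$ (Definition 2.7), coinciding letters only simplify the polynomial afterwards. This justifies the phrase "for any case in this set up". One further check is needed: the transfer $g$ must be applied compositionally. Definition 2.12 and Theorem 2.13 supply exactly this iterated, binary model-case linearity, so each subformula may be replaced by its polynomial.

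For the general 3-SAT formula I would induct on the depth of the balanced binary bracketing of the clauses, as in the doubled form (3.1)–(3.2). The inductive hypothesis says that each half-formula $\Phi_L$ and $\Phi_R$ maps to a polynomial $\phi_L$ or $\phi_R$ of the form $(\cdots)'$. The step is $\Phi_L\land\Phi_R\mapsto \phi_L.\phi_R.'$. In the shorthand, this turns each terminating $'$ into an inner dot and appends a new outer $'$. This is precisely the parenthesis pattern "$((\ \ ).)'$" described in the statement. It also explains why all primes accumulate on the right and only dots appear inside. Associativity of $\land$, matched by the associativity of the polynomial composition $\hat p.\hat q.'$ (a direct expansion), lets me choose this bracketing without loss of generality.

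The main obstacle is not algebraic; it is making the notation rigorous. The claim "if $).)'$ is put inside, we just use dots" requires showing that an inner complement of an already complemented bundle is consistently recorded as a single dot, using $(\cdot)''=(\cdot)$ from rule 1 of Definition 2.2. I would handle this with a small lemma, proved by induction on nesting depth, stating that $(X')\,.=X$. With that lemma, every intermediate prime cancels against the next dot, and only the final outer $'$ survives.
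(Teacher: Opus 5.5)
Your proposal is correct and is essentially the paper's own argument: clauses become products via the row $\hat p\lor\hat q\mapsto\hat p\hat q$ of $g(p',q',0)$, and each of the (number of clauses minus one) conjunctions becomes a composition $\hat p.\hat q.'$; you only organize this as an explicit induction on the bracketing. The one piece to drop is your closing lemma. $(X')\,.=X$ is true, but the remark ``we just use dots'' only says that an inner $'$ is rewritten as a dot (both mean $+1$), while the next composition adds its own dot, which is why (3.20) (which you cite as (3.17)) displays $).).$. Cancelling them as you propose would turn (3.20) into $(((x_1x_2x_3).(x_4x_5x_6).)((y_6y_5y_4).(y_3y_2y_1).))'$, which is the same polynomial but not the stated $((\ ).)'$ shape, and this contradicts your own verbatim derivation of that formula.
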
 \begin{proof} This rests on the multiplication $2n$ times of the polynomial in the $n-$clauses of the type $\hat p \hat q,$ and a composition of $p.q.'$ as many times as there are clauses minus 1, $2n-1.$ For the family, we took $g(p',q',0)$ with $p \lor q \mapsto pq$ and $p \land q \mapsto (p+1)(q+1)'=p.q.'$, as we need the simplest possible composition within the clauses.\end{proof} We shall now generalize the SAT-problem to any length in clauses of the type $S^j_{c(\lor,\ell>3)}=(p\lor q \lor r \lor s \lor \cdots \lor s_{\ell})$ and to any number of literals $|p, q, r, s  \cdots s_{\ell}|=\ell \in \mathbb{N}$ of any number $n \in \mathbb{N}$ of conjunctions where exponent $1,2\cdots n$ just shows the position of the clause $S^j_{c(\lor,\ell>3)}$ in the line counting both from $(1,2,\cdots ,n-1, n)$ the left, and from $(n, n-1,\cdots ,2,1)$ the right.   \begin{align} S^1_{c(\lor,\ell>3)} \land S^2_{c(\lor,\ell>3)} \land \ldots \land S^{j-1}_{c(\lor,\ell>3)} \land S^j_{c(\lor,\ell>3)}\land\ldots \land S^{n-1}_{c(\lor,\ell>3)} \land S^n_{c(\lor,\ell>3)}\\ \  \land  S^{n}_{c(\lor,\ell>3)} \land S^{n-1}_{c(\lor,\ell>3)}\land \ldots \land S^j_{c(\lor,\ell>3)} \land S^{j-1}_{c(\lor,\ell>3)} \land \ldots S^2_{c(\lor,\ell>3)}\land S^1_{c(\lor,\ell>3)}. \end{align}  \begin{theorem} After using the polynomial transfer the isomorphism, $g:\mathcal{OP}\to \mathcal{PB}$ for the clauses, by multiplication $$g(S^j_{c(\lor,\ell>3)})=s_g^j=\hat p \hat q=(pqrs...p_\ell)_{\hat p}(pqrs...p_\ell)_{\hat q}$$ where each literal can be placed in any other clause, reversed(negated) or not, alone or with other literals in an arbitrary way, we get a binary mirrored automorphism $g:\mathcal{PB} \times \mathcal{PB} \to \mathcal{PB} $ composition $g(\hat s^1_g \cdots \hat s^n_g \circ \hat s^n_g \cdots \hat s^1_g )$  based on the transfer function for disjunction defined for $$g_{(\land)}(p',q',0)(\hat p \hat q)=((\hat p.)(\hat q.))'=\hat p.\hat q.'$$ composing now polynomial arguments binary  $(s^{j-1}_g \circ s^j_g) \in \mathcal{PB}.$ \begin{align}g^0(s_g^j)=((pqrs\cdots s_{\ell}).0.)'=(pqrs\cdots s_{\ell})= s_g^j;  \\ g^1=(s_g^1 \circ s_g^2)=(pqrs\cdots s_{\ell}) \circ (pqrs\cdots s_{\ell})  \\ =((pqrs\cdots s_{\ell}).(pqrs...s_{\ell}).)'=(s_g^1.s_g^2.)'\\ \nonumber
g^2=(s_g^1.s_g^2.).\circ (s_g^3.s_g^4.)'=(((s_g^1.s_g^2.).).(((s_g^3.s_g^4.).).)'\\ g^{1,2,\cdots j} \cdots \cdots \cdots g^{j\cdots ,2,1} \\ \nonumber
g^{1,2,\cdots n}=((s_g^1. s_g^2.).(s_g^3. s_g^{4}.).\ \cdots (s_g^{n-3}. s_g^{n-2,1}.).) (((s_g^{n-1}. s_g^n.).).)\cdots .)'  \\
 \ g^{n\cdots ,2,1}=(((s_g^n. s_g^{n-1}.).).( s_g^{n-2}. s_g^{n-3}.).).  \ \cdots (( s^4_g.s_g^3.).( s_g^2.  s_g^1.).).)'\\
g^{1,2,\cdots n} \circ g^{n\cdots ,2,1} =\\
(\cdots(((( s_g^1.  s_g^2.). \ \cdots ( s_g^{n-1}.  s_g^n.).).)\cdots .).  ( s_g^n.  s_g^{n-1}.).\cdots .( s_g^2. s_g^1.).).).)\cdots .)'=0  
\end{align} \end{theorem}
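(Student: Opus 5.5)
The plan is to prove the statement by induction on the number of clauses. The algebra rests on two facts from Section 2 and the table of polynomial symbols. First, in the family $g(p',q',0)$ a disjunction $\hat p\lor\hat q$ becomes the product $\hat p\hat q$. This is iterated literal by literal, so a clause $S^j_{c(\lor,\ell>3)}$ becomes the phrase $s_g^j=pqrs\cdots s_\ell$, with a negated literal $x'$ sent to $x.=x+1$ because $\lnot$ is self-dual (equation (2.16)). Second, a conjunction $\hat p\land\hat q$ becomes $\hat p.\hat q.'=(\hat p+1)(\hat q+1)+1$.

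The base case is the single clause. Here I would check that $g^0(s_g^j)=((s_g^j).0.)'=(s_g^j+1)\cdot 1+1=s_g^j$. This shows that composing with the neutral element $0$ (true, after renaming $1\mapsto 0$) is the identity.

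The inductive step is justified by the iteration in Definition 2.9 and Theorem 2.10. The arguments $\hat p,\hat q$ may be arbitrary multinomials, because by the Selector renaming $\hat p\mapsto(r\cdots r_\ell)$ the binary model case applies. So if $G_1=g^{1,\dots,k}$ and $G_2$ are already the PBNF images of two sub-conjunctions, then $g(G_1\land G_2)=(G_1.G_2.)'$. This gives $g^1=(s_g^1.s_g^2.)'$ and then $g^2=(((s_g^1.s_g^2.).).((s_g^3.s_g^4.).).)'$. The step uses the cancellation $((X)')\,.=X.$, which turns the outer complement of a finished block into an inner dot when it is placed inside a larger composition; this is the rule ").)' put inside becomes dots" from Theorem 3.11.

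Correctness as a logical transfer follows pointwise. For a truth assignment, $s_g^j=0$ exactly when the clause is true, since a product vanishes iff some factor vanishes. Moreover $(A+1)(B+1)+1=0$ exactly when $A=B=0$. Hence each composed block vanishes iff all of its clauses are true. By associativity and commutativity of $\land$, and correspondingly of the operation $(A,B)\mapsto A.B.'$ (it equals $AB+A+B$, which is symmetric and associative in $\mathbb{Z}_2[\ldots]$), the bracketing in $g^{1,\dots,n}$ and in the reversed $g^{n,\dots,1}$ does not affect the resulting Boolean function. Composing the two halves gives the final displayed equation $g^{1,\dots,n}\circ g^{n,\dots,1}=0$. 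Its kernel is exactly the set of satisfying assignments of the mirrored CNF, and since $x\land x=x$ (rule 2 of Definition 2.1) this is the same as the set for the original CNF.

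The main obstacle I expect is not the algebra but making precise the sense in which the map is an "automorphism" and the formula is "mirrored". I would handle this by working modulo the Boolean idempotency ideal ($x^2=x$, Definition 2.5). In that quotient, $A.B.'=AB+A+B$ is an associative, commutative, idempotent operation with identity $0$. The bookkeeping of dots and primes is then just a notation for iterating this single operation over a binary tree with leaves $s_g^1,\dots,s_g^n,s_g^n,\dots,s_g^1$. Idempotency also explains why the mirror image does not change the kernel.
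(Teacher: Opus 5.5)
Your proposal is correct. The paper gives no separate proof of this theorem. The displayed chain $g^0, g^1, g^2,\dots$ is offered as self-justifying, resting on three things: the table entries $\hat p\lor\hat q\mapsto\hat p\hat q$ and $\hat p\land\hat q\mapsto\hat p.\hat q.'$ for $g(p',q',0)$, the claimed isomorphism $g:\mathcal{OP}\to\mathcal{PB}$, and the binary model case of Section 2. This is much like the one-line proof of the preceding four-clause 3-SAT theorem, which just counts the products inside clauses and the $\hat p.\hat q.'$ compositions between them.

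Your induction builds the same object, but you justify it differently. Instead of invoking the transfer as an isomorphism, you verify pointwise (reading true as $0$) that a product vanishes iff some factor does, and that $(A+1)(B+1)+1$ vanishes iff $A=B=0$. So the kernel of the final PBE is exactly the set of satisfying assignments. You also note that $A\circ B=AB+A+B$ is associative and commutative with identity $0$, and idempotent modulo $x^2=x$. That is what actually licenses ignoring the bracketing and the mirrored doubling; modulo $x^2=x$ the whole left-hand side is just $1+\prod_{j=1}^{n}(1+s_g^j)$.

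The paper's route is shorter but only as rigorous as its symbol table and its informal use of ``automorphism''. Yours proves the encoding claim outright and gives that word a precise meaning.

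One small slip: both the dot and the prime denote $+1$, so the cancellation you need is $((X)').=X$, not $X.$. This double cancellation is exactly why the paper's $g^2$ collapses to $(s_g^1.s_g^2.)(s_g^3.s_g^4.)+1$.
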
 In (3.30), we obtain a polynomial equation to solve. A major reduction can be made with a condition on the clauses \begin{corollary} If the clauses are \emph{negative repetitive} such that for all $(pqrs\cdots s_{\ell})\cap (pqrs\cdots s_{\ell}) \not = \emptyset$ with different parity then we get the reduction $$(\hat p \lor \hat q) \mapsto \hat p+\hat q \hookrightarrow (\hat p \equiv \hat q)$$ $$\hat s_g^j=(pqrs\cdots s_{\ell})+(pqrs\cdots s_{\ell})+\cdots +(pqrs\cdots s_{\ell})$$ and after renumbering the two mirrored series in $(3.30)$ to one \begin{align}=s_g^1+ s_g^2+ s_g^3+ s_g^{4}+\cdots +s_g^{n-3}+ s_g^{n-2} + s_g^{n-1}+ s_g^n=0\end{align} \end{corollary}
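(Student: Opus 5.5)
The plan is to reduce the corollary to one algebraic identity for the conjunctive composition in $g(p',q',0)$, and then to apply that identity repeatedly inside the nested formula (3.30). The identity is
$$\hat p.\hat q.'=(\hat p+1)(\hat q+1)+1=\hat p\hat q+\hat p+\hat q .$$
It is the same as the additive formula 1 in the list of conjunctive compositions above, and also the one quoted in Theorem 3.13. From it, $\hat p.\hat q.'=\hat p+\hat q$ holds exactly when the product $\hat p\hat q$ vanishes identically over $\mathbb{Z}_2$, using the rules $p^\ell=p$ and $p\,p.=p(p+1)=0$ from the definition of the PBNF-transform. So the first step is to make the hypothesis ``negative repetitive'' precise: any two phrases $s_g^i=\prod_{x\in A_i}x^{\epsilon}$ and $s_g^j$ share at least one letter that occurs with opposite parity, i.e.\ $x$ in one and $x.$ in the other. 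Under that reading, $s_g^i s_g^j$ contains a factor $x\,x.=0$, so $s_g^i s_g^j=0$, and the binary composition reduces to $s_g^i\circ s_g^j=s_g^i+s_g^j$. This is exactly the mechanism in the examples of Section 1 and in (3.8).

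The second step is to lift this from single phrases to the partial sums produced by the iteration $g^1,g^2,\dots$ in the preceding theorem. I would argue by induction on the depth of the bracketing. Suppose the left block has already been reduced to $\Sigma_L=\sum_{i\in L}s_g^i$ and the right block to $\Sigma_R=\sum_{j\in R}s_g^j$. Then
$$\Sigma_L\circ\Sigma_R=\Sigma_L\Sigma_R+\Sigma_L+\Sigma_R,\qquad \Sigma_L\Sigma_R=\sum_{i\in L,\,j\in R}s_g^i s_g^j .$$
By pairwise negative repetition every cross term vanishes, so $\Sigma_L\circ\Sigma_R=\Sigma_L+\Sigma_R$. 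This shows that the reduction does not depend on the bracketing, and so covers every node of the tree in (3.30). The outer complements ``$'$'' introduce no new terms, because each one is just the ``$+1$'' already built into the identity above.

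The third step is the final collapse. Applying the induction to $g^{1,\dots,n}\circ g^{n,\dots,1}$ gives $\sum_{j=1}^n s_g^j+\sum_{j=n}^1 s_g^j=0$, which is the doubled sum (1.3). I then renumber the two mirrored series as one sequence, as the statement says, to obtain (3.31). The remark $(\hat p\lor\hat q)\mapsto\hat p+\hat q\hookrightarrow(\hat p\equiv\hat q)$ is just Lemma 3.10 applied to the reduced composition.

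The main obstacle is the hypothesis. The statement's ``$\cap\neq\emptyset$ with different parity'' must mean that \emph{every} pair of phrases, not merely adjacent ones, contains a complementary literal. Otherwise cross terms survive in $\Sigma_L\Sigma_R$, as in the example with the leftover term $ab.c.d$. I would therefore state the pairwise condition explicitly and note that, without it, one keeps the surviving products as in formulas 1--4.

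A second subtlety is that the mirrored doubling formally gives $\sum+\sum=0$ identically modulo 2. To avoid this, the renumbering must be read as the single chain $g^{1,\dots,n}$, whose value is $\sum_{j=1}^n s_g^j$. I would justify this by idempotence of conjunction: $S\land S=S$ on the logical side corresponds to $s.s.'=s$ on the polynomial side, and this also requires $s\cdot s=s$ rather than $0$. I would therefore flag that the mirrored copy must be composed with $\circ$ and not added, so that it is absorbed instead of cancelling.
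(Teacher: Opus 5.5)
Your core argument is the paper's. Its proof is the single observation that $p.q.'=pq+p+q$ and that the product term vanishes because the same letter occurs with opposite parity in different clauses, $pp.=0$. Your explicit pairwise reading of ``negative repetitive'' and your induction over the bracketing are correct elaborations that the paper leaves implicit: the cross terms of $\Sigma_L\Sigma_R$ vanish whenever $L\cap R=\emptyset$.

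One step fails as written. In your third step you apply the induction to the top node $g^{1,\dots,n}\circ g^{n,\dots,1}$. There $L$ and $R$ contain the same clauses, so the pairwise hypothesis cannot hold: a non-tautological phrase has no letter of opposite parity with itself. The diagonal cross terms $s_g^j s_g^j=s_g^j$ therefore survive, and the step does not give the doubled sum (1.3). Computed directly, each half reduces to $\Sigma=\sum_j s_g^j$ by your induction. At the top node, $\Sigma_L\Sigma_R=\sum_j s_g^j+\sum_{i\neq j}s_g^i s_g^j=\Sigma$, because the off-diagonal terms vanish by your hypothesis. Hence $\Sigma\circ\Sigma=\Sigma+\Sigma+\Sigma=\Sigma$, which is (3.31) and not the identically vanishing $\Sigma+\Sigma=0$.

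This is exactly what your last paragraph reaches via $s.s.'=s$. So delete the claim in step 3 and let that computation replace it. The absorption of the mirrored copy is what the algebra forces, not a choice of how to ``read'' the renumbering. On this point your version is more careful than the paper, whose proof does not address the mirrored half at all.
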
 \begin{proof} This follows from the definition of the composition $p.q.'=pq+p+q$ when $pq=pp.=0$ because the same letter has different parity in different clauses. \end{proof} \begin{example} Prove that this SAT-problem have just one solution\begin{align} (p \lor q \lor r) \land (p \lor q \lor \lnot r) \land (p \lor \lnot q \lor r) \land  ( p \lor \lnot q \lor \lnot r)\land \\ \nonumber (\lnot p \lor q \lor r)\land(\lnot p \lor q \lor \lnot r) \dot \land(\lnot p \lor \lnot q \lor r) \end{align} \begin{proof} This SAT is odd so we put the principal operator (with a dot) in the last connective, and then compose the odd clause with two already composed clauses on the next turn. We have as before multiplication for the clauses and $ \land \mapsto_{(g,'p,'0)} p.q.'$ for the compositions
$$(p q r \circ p q r.)\circ  (p q. r\circ p q. r.)\circ  (p. q r\circ  p. q r.) \dot \circ  (p. q. r)$$ $$=((p q r). (p q r.).)\circ  ((p q. r). (p q. r.).).\dot\circ  ((p. q r). (p. q r.).).\circ  (p. q. r)$$ Here we stop the composing process, because we can use Corollary 3.14.1 for reductions; for the complete composed formula, see (3.33) below. $$=(pqr+pqr.)\circ (pq.r+pq.r.)\circ (p.qr+p.qr.)\circ (p. q. r)$$
$$=((pqr+pqr.).(pq.r+pq.r.).).\circ ((p.qr+p.qr.).(p. q. r).).$$
$$=(pqr + pqr.+ pq.r + pq.r.)\circ (p.qr+p.qr.+ p.q.r)$$
$$=(pqr + pqr.+ pq.r + pq.r.+p.qr+p.qr.+ p.q.r)$$ And here we found an example of a totally additive result of the final composition. We factorize and reduce
$$=pq(r + r.)+ pq.(r + r.)+p.q(r+r.)+ p.q.r$$
$$=pq+ pq.+p.q+ p.q.r=pq+ p+q+ p.q.r=p.q.+ p.q.r=1$$ $$\equiv p.q.r.=1$$ and by adding 1 on both sides and factoring out $(p.q.)$ we prove we just got the solution (000) in $g(p',q',0).$ This time the saturated term gave us the solution directly.\end{proof} The final formula without reductions became \begin{align}(p q r \circ p q r.)\circ  (p q. r\circ p q. r.)\circ  (p. q r\circ  p. q r.)  \circ  (p. q. r)\\=((p q r). (p q r.).)\circ  ((p q. r). (p q. r.).).\dot\circ  ((p. q r). (p. q r.).).\circ  (p. q. r)\\ =((((p q r). (p q r.). (((p q. r). (p q. r.).).).).\circ ((((p. q r). (p. q r.).).).(p. q. r).).\\=((((((p q r). (p q r.).). (((p q. r). (p q. r.).).).).).(((((p. q r). (p. q r.).).).(p. q. r).).).)=1.\end{align} \end{example}In the next example, we shall decide if it is SAT or UNSAT \begin{example} Here we look into a problem written in $g(p',q',0)$ and we shall decide the nature of this SAT.\begin{align}(x_1 x_2. x_3. \circ  x_1. x_2. x_3.)\circ (x_2.x_3.\circ x_2x_3)  \circ (x_3x_4 \circ x_3 x_4.) \\ \nonumber \circ (x_2. x_3. \circ x_2 x_3)    \circ (x_3 x_4\circ x_3 x_4.)\end{align}\begin{proof} We start by doing five compositions and reductions in the order above.\begin{align}(x_1 x_2. x_3. \circ  x_1. x_2. x_3.)=x_1 x_2. x_3.+ x_1. x_2. x_3.=  x_2.x_3.(x_1+x_1.)=x_2.x_3.\\
(x_2.x_3.\circ x_2x_3)=x_2.x_3.+ x_2x_3\\
(x_3x_4 \circ x_3 x_4.)= x_3x_4 + x_3 x_4.=x_3 \\
( x_2. x_3. \circ x_2 x_3)=x_2. x_3.+ x_2 x_3\\
(x_3 x_4\circ x_3 x_4.)=x_3 x_4+ x_3 x_4.=x_3
\end{align} This was the first round of composition; now we compose (3.38) and (3.39): $$x_2.x_3.(x_2.x_3.+ x_2x_3)+ x_2.x_3. +x_2.x_3.+ x_2x_3=x_2.x_3.+ x_2x_3$$ and (3.40) and (3.41) gives $x_2. x_3.+ x_3$ which we compose  with $x_3$ from (3.42) to get $x_3(x_2. x_3.+ x_3)+x_3+ x_2. x_3.+ x_3=x_2. x_3.+ x_3$ and finally $$(x_2.x_3.+ x_2x_3)(x_2. x_3.+ x_3)+ x_3+ x_2x_3=x_2. x_3.+x_3=0 \equiv x_2x_3 =x_2.$$ which gives $(x_1,x_2, x_3, x_4)=(x_1, 1, 0, x_4)$. \end{proof}\end{example} So this is SAT, and the example is from [7], where it was presented in the operator space \begin{align}(x_1 \lor \lnot x_2 \lor \lnot x_3)\land (\lnot x_1 \lor \lnot x_2 \lor \lnot x_3) \land (x_2\lor x_3)\land(x_3\lor x_4)\\ \nonumber \land(x_3 \lor \lnot x_4)  \land (\lnot x_2 \lor \lnot x_3) \land (x_2\lor x_3)\land (x_3\lor x_4)\land (x_3 \lor \lnot x_4) \end{align} We collect the calculation rules, giving reductions or augmentations(3.50), we observed so far in our study.\begin{lemma} Under composition of clauses, here $\chi^j,j=1,2$ by conjunctions in $g(p',q',0)$ we get \begin{align}
\chi_1 \chi_2 + \chi_1 \chi_2= 0 \\
\chi_1 \chi_2 + \chi_1 \chi_2.= \chi_1 \\
\chi_1 \chi_2 + \chi_1. \chi_2=  \chi_2 \\
\chi_1. \chi_2 + \chi_1 \chi_2.= \chi_1+ \chi_2 \\
\chi_1 \chi_2 + \chi_1. \chi_2.= (\chi_1 +\chi_2).\\
\chi_1 \chi_2. + \chi_1. \chi_2.=\chi_2.\\
\chi_1 \chi_2. \chi_3.+ \chi_1. \chi_2. \chi_3.=\chi_2.\chi_3.\\
\chi_1\chi_2+\chi_1\chi_3+ \chi_2 \chi_3  =\chi_1.\chi_2.+\chi_1.\chi_3.+\chi_2. \chi_3. +1 =0\\
\chi_1.  \chi_2  \chi_3 +  \chi_1  \chi_2.  \chi_3 + \chi_1  \chi_2  \chi_3.+\chi_1 . \chi_2  . \chi_3. =\chi_1+\chi_2+\chi_3\\
\chi_1=  (\chi_1 \lor \chi_2)\land (\chi_1 \lor \lnot\chi_2) \mapsto (\chi_1 \chi_2).(\chi_1 \chi_2.).' =\chi_1
\end{align} .\end{lemma}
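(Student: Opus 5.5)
The plan is to verify every identity of the lemma by direct computation in the Boolean polynomial ring over $\mathbb{Z}_2$. This ring is the setting of the PBNF-transform in the Definition of Section 2 that introduces $+$ and $\cdot$ modulo 2. In it we use only three rules: $\chi+\chi=0$, idempotency $\chi^2=\chi$ (the projection rule $\pi(p^{\ell})=p$), and the compact notation $\chi.=\chi+1$. The symbols $\chi_1,\chi_2,\chi_3$ may stand for whole phrases, i.e.\ products of literals. The rules still apply because such a phrase is again idempotent and still satisfies $\chi+\chi=0$, as used in the proof of Theorem 2.11. So it suffices to prove the identities for $\chi_j$ treated as Boolean variables.

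I would group the identities by the technique needed. The first four two-term identities need only one factorization each. Two equal terms cancel, giving $\chi_1\chi_2+\chi_1\chi_2=0$. Next, $\chi_1\chi_2+\chi_1\chi_2.=\chi_1(\chi_2+\chi_2+1)=\chi_1$, and symmetrically the third identity gives $\chi_2$. The sixth identity is $\chi_1\chi_2.+\chi_1.\chi_2.=\chi_2.(\chi_1+\chi_1.)=\chi_2.$. The seventh is the same argument with the common factor $\chi_2.\chi_3.$ pulled out. For the fourth and fifth identities I would expand the complements. This gives $\chi_1.\chi_2+\chi_1\chi_2.=2\chi_1\chi_2+\chi_1+\chi_2=\chi_1+\chi_2$. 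It also gives $\chi_1\chi_2+\chi_1.\chi_2.=2\chi_1\chi_2+\chi_1+\chi_2+1=(\chi_1+\chi_2).$.

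For the symmetric quadratic identity I would expand each $\chi_i.\chi_j.=\chi_i\chi_j+\chi_i+\chi_j+1$. Summing over the three pairs, each linear term appears twice and cancels, and the constant appears three times, i.e.\ once. Adding the outer $+1$ then recovers $\chi_1\chi_2+\chi_1\chi_3+\chi_2\chi_3$. The trailing ``$=0$'' is read as the PBE condition imposed on this common value, as in (3.14), not as an identity. For the ternary parity identity I would expand the four terms. The cubic term $\chi_1\chi_2\chi_3$ occurs four times and vanishes. Each quadratic $\chi_i\chi_j$ occurs exactly twice: once from the term where the third letter is dotted, and once from the fully saturated term $\chi_1.\chi_2.\chi_3.$. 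So all quadratics cancel. Each linear $\chi_i$ occurs once, from $\chi_1.\chi_2.\chi_3.$ only, which leaves $\chi_1+\chi_2+\chi_3$. This matches Remark 3.15 and the ternary parity operator of [2].

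Finally, the last identity follows from the composition rule $\hat p.\hat q.'=\hat p\hat q+\hat p+\hat q$ applied to $\hat p=\chi_1\chi_2$ and $\hat q=\chi_1\chi_2.$. Their product is $\chi_1\chi_2\chi_2.=0$, since $\chi_2\chi_2.=\chi_2^2+\chi_2=0$; this is exactly the reversed-repetition mechanism of Corollary 3.14.1. The remaining sum is handled by the second identity and equals $\chi_1$. The translation from the operator form to the polynomial form comes from the table in Section 2 for the family $g(p',q',0)$: there $\lor\mapsto$ product and $\land\mapsto \hat p.\hat q.'$.

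I expect the main obstacle to be the bookkeeping in the parity identity, i.e.\ tracking the multiplicities modulo 2 of every monomial. I would handle it by counting, for each monomial, how many of the four terms contain it. A secondary point is justifying that identities proved for variables remain valid when $\chi_j$ are phrases sharing letters. Idempotency and characteristic 2 are the only properties used, and both hold for products of literals, so no further argument should be needed.
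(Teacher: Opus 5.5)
\textbf{Gap: the ternary parity identity is false as stated, and your expansion drops a constant.}

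Your method is the one the paper relies on: expand every dotted factor via $\chi.=\chi+1$ and cancel modulo $2$. The lemma has no proof of its own; it collects rules obtained in the worked examples of Section~3. Your checks of the two-term identities are correct. So is your check of the symmetric quadratic identity, and reading its trailing $=0$ as a PBE condition is right. The last identity also checks out.

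The problem is in your treatment of $\chi_1.\chi_2.\chi_3.=(\chi_1+1)(\chi_2+1)(\chi_3+1)$. You account for its cubic, quadratic and linear monomials but lose its constant term $1$. Completing your own multiplicity count gives
\[
\chi_1.\chi_2\chi_3+\chi_1\chi_2.\chi_3+\chi_1\chi_2\chi_3.+\chi_1.\chi_2.\chi_3.=\chi_1+\chi_2+\chi_3+1 .
\]
In the paper's notation this is $(\chi_1+\chi_2+\chi_3).$, not $\chi_1+\chi_2+\chi_3$. A single evaluation confirms it: at $\chi_1=\chi_2=\chi_3=0$ only the saturated term is nonzero, so the left side is $1$ while the claimed right side is $0$.

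The reason is structural. The four terms are the unit symbols of the even-weight inputs $011,101,110,000$, so their sum is the even-parity indicator. The odd-parity operator $p+q+r$ of [2] is instead the sum of the unit symbols of $100,010,001,111$, namely $\chi_1\chi_2.\chi_3.+\chi_1.\chi_2\chi_3.+\chi_1.\chi_2.\chi_3+\chi_1\chi_2\chi_3$.

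Agreement with the remark on [6] (Remark~3.12) does not confirm your computation. The paper's expansion of $x_1.x_2.x_3.$ there drops the same $+1$.

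The discrepancy also matters under the PBE reading. The equations $\chi_1+\chi_2+\chi_3=0$ and $\chi_1+\chi_2+\chi_3+1=0$ have complementary kernels. Take the CNF from [6]: its satisfying truth assignments are exactly those of even weight. In the $g(p',q',0)$ variables, where $0$ stands for true, they are therefore the assignments of odd weight. The uncorrected equation $x_1+x_2+x_3=0$ is then solved exactly by the assignments that falsify the CNF.

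So this item cannot be proved as written. You should flag it and prove a corrected version, in one of two forms:
\begin{itemize}
\item keep the left side and take $(\chi_1+\chi_2+\chi_3).$ as the right side, or
\item keep $\chi_1+\chi_2+\chi_3$ as the right side and replace the left side by the odd-weight unit symbols.
\end{itemize}
Evaluating each claimed identity at one point, such as $\chi_1=\chi_2=\chi_3=0$, would have caught this.
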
 A harder SAT-problem could be the following, presented as a composition of phrases in $g(p',q',0)$ \begin{align}x_1  x_2  x_3.\circ x_4.  x_5.  x_6. \circ x_3 x_8\circ x_3  x_8. \circ x_4   x_5.  \circ x_9.  x_{10}. \circ x_1 x_9 \circ x_{7} x_{8}\end{align} We compose the phrases that will give fewer higher terms first, as we may rearrange them to the following \begin{align} ((x_1 x_2 x_3.\circ x_3 x_8) \circ (x_4. x_5. x_6. \circ x_4 x_5.)) \circ ((x_9.  x_{10}. \circ x_1 x_9)\circ (x_{7} x_{8}.\circ x_3  x_8.))=0 \end{align} Here $\ell=10 $ in eight phrases, giving seven compositions. \begin{equation}\left\{ \begin{array}{ll}
1. x_1  x_2   x_3.+x_3 x_8  \\ 2. \ x_4.  x_5.  x_6.+x_4   x_5. \\ 3.\ x_9.  x_{10} +x_1 x_9 \\ 4. \ x_3 x_{7} x_8. +x_3  x_8.+x_{7} x_{8}.= x_3. x_7x_8. +x_3x_8.\\ \end{array}\right. \end{equation} We compose $(1.\circ 2.)$ using  $ (\sum_i s_g^{i})(\sum_j s_g^{j}).+\sum_{j} s^{j}=0$   $$(x_1 x_2 x_3.+x_3 x_8) (x_4. x_5. x_6.+x_4   x_5.). +x_4. x_5. x_6.+x_4   x_5. $$ and then $(3.\circ 4.)$  $$=(x_9.  x_{10} +x_1 x_9)(x_3. x_7x_8. +x_3x_8.).+ x_3. x_7 x_8. +x_3x_8.$$ and finally the last composition $(1. 2.\circ 3. 4.)$ \begin{align}((x_1 x_2 x_3.+x_3 x_8) (x_4. x_5. x_6.+x_4   x_5.). +x_4. x_5. x_6.+x_4   x_5.)\\ ((x_9.  x_{10} +x_1 x_9)(x_3. x_7x_8. +x_3x_8.).+ x_3. x_7 x_8. +x_3x_8.).\\ +(x_9.  x_{10} +x_1 x_9)(x_3. x_7x_8. +x_3x_8.).+ x_3. x_7 x_8. +x_3x_8.=0\end{align} Again, we used (3.10) number 3, for a shorter presentation. For the equality $(\sum_i s_g^{i})(\sum_j s_g^{j}).=\sum_{j} s^{j}$ we multiply with the conjugate $(\sum_i s_g^{i})(\sum_j s_g^{j}).\sum_{j} s^{j}_g=\sum_{j} s^{j}_g \sum_{j} s^{j}_g $from the right to get $$ \sum_{j} s^{j}_g=(x_9.  x_{10} +x_1 x_9)(x_3. x_7x_8. +x_3x_8.).+ (x_3. x_7 x_8. +x_3x_8.)=0.$$ Again we multiply with the right sided conjugate  and $x_3. x_7 x_8. +x_3x_8.=0$ and take $x_3=0,  x_7=0$ and $x_8=1$, and we use that in (3.57)
\begin{align}x_1 x_2  (x_4. x_5. x_6.+x_4 x_5.). +x_4. x_5. x_6.+x_4   x_5.  =0\end{align} We find again $x_5.(x_4.  x_6.+x_4) =0$
 Now we have $x_3=0, x_4=0, x_5=0, x_6=1,x_7=0,x_8=1$ and finally by testing $x_1=0,x_2=x_2,x_9=1$ and $x_{10}=1$ we get the string in $g(p', q', 0)$,$$(0_1x_20_30_40_51_60_71_81_9 x_{10}).$$ which is a YES-instance for
\begin{align}X_1 \lor X_2 \lor X_3.\land X_4. \lor  X_5. \lor X_6. \land X_3 \lor X_8\land X_3 \lor X_8.  \land X_4 \lor  X_5. \\ \nonumber \land X_9. \lor  X_{10}. \land X_1\lor X_9 \land X_{7}\lor X_{8}\end{align}
We note that we found reductions and that the problem was solvable by manual calculation.

\begin{thebibliography}{99}
\bibitem{Cu} Bergmann, A.: \emph{An Introduction to Many-valued and
Fuzzy Logic}, Cambridge University Press, 2008.
\bibitem{Cu} Borgeke, P.B.: \emph{PBNF-transform as a formulation of propositional Calculus, I}, ArXiv, 2026.
\bibitem{Cu} Borgeke, P.B.: \emph{PBNF-transform as a formulation of propositional Calculus, II}, ArXiv, 2026.
\bibitem{Cu} Church, A.: \emph{Introduction to Mathematical Logic},
Princeton University Press, 2008.
\bibitem{Co} Cook, S.: \emph{The P versus NP problem}, Clay Mathematics Institute, 2013.
\bibitem{Cu} J. Horacek, M. Kreuzer, \emph{On conversions from CNF to ANF}, Journal of Symbolic Computation 100, 2020, 164-186.
\bibitem{Cu} Kalla, P.: \emph{The Boolean Satisfiability (SAT) Problem, SAT Solver Technology, and Equivalence Verification}, Slides on homepage, August 30, 2021.
\bibitem{Cu} Kelly, L.J.: \emph{General Topology}, Van Nostrand Reinhold Company, 1961.
\bibitem{ps} Quine, W.V,.: \emph{Mathematical Logic}, W.W. Norton
Company Inc. Soc. 1940.
\bibitem{Cu}  Sansone, E., Solar-Lezama, A.:\emph{Power Term Polynomial Algebra for Boolean Logic}, 17th Pragmatics of SAT International Workshop, July 19, 2026, Lisbon, Portugal.
\bibitem{Cu}  Sikorski, R.: \emph{Boolean Algebras}, Springer-Verlag, Berlin, Heidelberg,  New York, 1969.
\end{thebibliography}
 \end{document}